\numberwithin{equation}{theorem}
\newcommand{\D}{\displaystyle}
\DeclareMathOperator{\E}{E}
\renewcommand{\m}{\mathfrak{m}}
\theoremstyle{theorem}
\begin{document}
\title{Eulerian graded $\scr{D}$-modules}
\author{Linquan Ma and Wenliang Zhang}
\address{Department of Mathematics\\ University of Michigan\\ Ann Arbor\\ Michigan 48109}
\email{lquanma@umich.edu}
\address{Department of Mathematics\\ University of Nebraska\\ Lincoln\\ Nebraska 68588}
\email{wzhang15@unl.edu}
\thanks{The second author was partially supported by the NSF grant DMS \#1068946.}
\maketitle

\begin{abstract}
Let $R=K[x_1,\dots,x_n]$ with $K$ a field of arbitrary characteristic and
$\scr{D}$ be the ring of differential operators over $R$. Inspired by Euler formula for
homogeneous polynomials, we introduce a class of graded
$\scr{D}$-modules, called {\it Eulerian} graded $\scr{D}$-modules.
It is proved that a vast class of $\scr{D}$-modules, including all
local cohomology modules $H^{i_1}_{J_1}\cdots H^{i_s}_{J_s}(R)$
where $J_1,\dots,J_s$ are homogeneous ideals of $R$, are Eulerian.
As an application of our theory of Eulerian graded
$\scr{D}$-modules, we prove that all socle elements of each local
cohomology module $H^{i_0}_{\bm}H^{i_1}_{J_1}\cdots
H^{i_s}_{J_s}(R)$ must be in degree $-n$ in all characteristic. This answers a question raised in
\cite{YiZhangGradedFModules}. It is also proved that graded $F$-modules are Eulerian and hence the main result in \cite{YiZhangGradedFModules} is recovered. An application of our theory of Eulerian graded $\scr{D}$-modules to the graded injective hull of $R/P$, where $P$ is a homogeneous prime ideal of $R$, is discussed as well.
\end{abstract}

\section{Introduction}
Let $R=K[x_1,\dots,x_n]$ be a polynomial ring in $n$ indeterminates
over a field $K$ with the standard grading, {\it
i.e.} $\deg(x_i)=1$ for each $x_i$ and $\deg(c)=0$ for each nonzero
$c\in K$. Let $\partial_i^{[j]}$ denote the $j$-th order differential operator
$\D\frac{1}{j!}\cdot\frac{\partial^j}{\partial x_i^j}$ with respect to $x_i$ for each $0\leq i\leq n$ and $j\geq 1$.
By \cite[Th\'eor\`eme~16.11.2]{EGAIV}, $\scr{D}=R\langle \partial_i^{[j]}| 1\leq i\leq n, 1\leq j\rangle$ is the ring of $K$-linear differential operators of $R$ (note if $K$ has characteristic $0$, $\scr{D}$ is the same as the Weyl algebra $R\langle\partial_1,\dots,\partial_n\rangle$). The ring of $K$-linear differential operators $\scr{D}$ has a natural $\mathbb{Z}$-grading given by $\deg(x_i)=1$, $\deg(\partial_i^{[j]})=-j$, and $\deg(c)=0$ for each $x_i$, $\partial_i^{[j]}$ and each nonzero $c\in K$.

The classical Euler formula for homogeneous polynomials says that
\[\sum^n_{i=1}x_i\partial_i f =\deg(f)f\]
for each homogeneous polynomial $f\in R$. Inspired by Euler formula, we introduce a class of $\scr{D}$-modules called Eulerian graded $\scr{D}$-modules: the graded $\scr{D}$-modules whose homogeneous elements satisfy a series of ``higher order Euler formulas" ({\it cf.} Definition \ref{Eulerian D-modules}). One of our main results concerning Eulerian graded $\scr{D}$-modules is the following (proved in Section 2 and Section 5):
\begin{theorem}
Let $R=K[x_1,\dots,x_n]$, $\bm=(x_1,\dots,x_n)$, and $J_1,\dots, J_s$ be homogeneous ideals of $R$. Then
\begin{enumerate}
\item $R(\ell)$ is Eulerian if and only if $\ell=0$.
\item Let $\sideset{^*}{}\E$ be the graded injective hull of $R/\bm$. Then $\sideset{^*}{}\E(\ell)$ is Eulerian if and only if $\ell=n$.
\item Each local cohomology module $H^{i_1}_{J_1}(\cdots(H^{i_s}_{J_s}(R)))$ is Eulerian for all $i_1,\dots, i_s$.
\end{enumerate}
\end{theorem}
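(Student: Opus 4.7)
The plan is to first settle on the definition hinted at in the introduction: a graded $\scr{D}$-module $M$ is Eulerian if for every homogeneous $z\in M$ of degree $d$ and every integer $t\ge 1$,
\[
\sum_{|\alpha|=t} x^{\alpha}\partial^{[\alpha]} z \;=\; \binom{d}{t}\, z,
\]
where $\partial^{[\alpha]}=\prod_i\partial_i^{[\alpha_i]}$ and $\alpha$ runs over multi-indices of weight $t$. Part (1) then follows from two direct computations: $R$ itself is Eulerian by the Vandermonde identity $\sum_{|\alpha|=t}\binom{\beta}{\alpha}=\binom{|\beta|}{t}$ applied to $\partial^{[\alpha]}(x^{\beta})=\binom{\beta}{\alpha}x^{\beta-\alpha}$; and if $R(\ell)$ were Eulerian with $\ell\ne 0$, the order-one formula applied to $1\in R(\ell)$ (which has degree $-\ell$) would force $0=\sum_i x_i\partial_i(1)=-\ell$.

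For part (3) the strategy is to establish two closure properties of the class of Eulerian graded $\scr{D}$-modules and then feed them into the \Cech complex. The properties are: (a) closure under graded $\scr{D}$-submodules, graded $\scr{D}$-quotients, and arbitrary direct sums, which is formal since each Euler identity is tested on a single homogeneous element; and (b) if $M$ is Eulerian and $f\in R$ is homogeneous, then $M_f$ is Eulerian. Given these, the \Cech complex computing $H^{i}_{J}(M)$ for a homogeneous ideal $J=(f_1,\ldots,f_r)$ has terms of the form $\bigoplus M_{f_{i_1}\cdots f_{i_k}}$, each Eulerian by iterating (b); the cohomology $H^{i}_{J}(M)$ is then a graded $\scr{D}$-subquotient of an Eulerian module and hence Eulerian by (a). Starting from $M=R$ (Eulerian by (1)) and iterating over $J_s,\ldots,J_1$ yields (3).

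The main obstacle is (b). For $t=1$ a direct calculation using order-one Euler identities on $m$ and on $f$ suffices. For general $t$ I would argue in two steps. First, show that $R_f$ is Eulerian by induction on the order $t$: expanding $0=\sum_{|\alpha|=t}x^{\alpha}\partial^{[\alpha]}(1)=\sum_{|\alpha|=t}x^{\alpha}\partial^{[\alpha]}(f\cdot f^{-1})$ via the divided-power Leibniz rule and using Eulerianness of $f\in R$ yields a recursion that forces $\sum_{|\gamma|=t}x^{\gamma}\partial^{[\gamma]}(1/f)=\binom{-\deg f}{t}(1/f)$ by the Vandermonde identity $\sum_{s=0}^{t}\binom{\deg f}{s}\binom{-\deg f}{t-s}=\binom{0}{t}$; iterating gives Eulerianness of $1/f^k$. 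Second, apply the Leibniz rule
\[
\partial^{[\alpha]}(m\cdot f^{-k}) \;=\; \sum_{\beta+\gamma=\alpha}\partial^{[\beta]}(m)\,\partial^{[\gamma]}(f^{-k})
\]
together with Vandermonde $\binom{a+b}{t}=\sum_{s+r=t}\binom{a}{s}\binom{b}{r}$ to deduce Eulerianness of $m/f^k$ from that of $m$ and of $f^{-k}$. Both ingredients are characteristic-free, so the argument works uniformly.

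Finally, (2) is deduced from (3) combined with the rigidity argument of (1). Under the normalization in which $R/\bm$ sits in degree $0$ inside $\sideset{^*}{}\E$, one has $\sideset{^*}{}\E\cong H^n_{\bm}(R)(-n)$; by (3) applied with $s=1$, $J_1=\bm$, $i_1=n$, the module $H^n_{\bm}(R)$ is Eulerian, so $\sideset{^*}{}\E(n)$ is Eulerian. For the converse, the socle generator $[1/(x_1\cdots x_n)]\in\sideset{^*}{}\E$ satisfies $\sum_i x_i\partial_i[1/(x_1\cdots x_n)]=-n\,[1/(x_1\cdots x_n)]$; in $\sideset{^*}{}\E(\ell)$ this element sits in degree $-\ell$, so the order-one Euler formula forces $-\ell=-n$, giving $\ell=n$.
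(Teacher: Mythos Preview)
Your argument for (3) is correct and in fact more uniform than the paper's. The paper proves closure of the Eulerian property under localization by a case split: in characteristic $0$ it first shows that checking $E_1$ alone suffices (via the identity $E_1E_r=(r+1)E_{r+1}+rE_r$), and then verifies $E_1(z/f^t)=\deg(z/f^t)\cdot z/f^t$ by the ordinary quotient rule; in characteristic $p$ it uses that $\partial_i^{[j]}$ is $R^{p^e}$-linear once $p^e>j$, rewriting $z/f^t=f^{p^e-t}z/f^{p^e}$ to pull the Euler operator past the denominator. Your divided-power Leibniz/Vandermonde argument handles both cases at once, which is a genuine simplification. Your route to the ``if'' half of (2) via $\sideset{^*}{}\E(n)\cong H^n_{\bm}(R)$ and (3) is also valid; the paper instead observes directly that the monomial computation proving $R$ Eulerian works verbatim for Laurent monomials $x_1^{j_1}\cdots x_n^{j_n}$ with negative exponents, so $\sideset{^*}{}\E(n)$ is Eulerian without appealing to (3).

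There is, however, a real gap in your ``only if'' arguments for (1) and (2) when $\Char K=p>0$. From the order-one formula applied to $1\in R(\ell)$ you obtain $0=-\ell$ \emph{as an element of $K$}, which only yields $p\mid\ell$, not $\ell=0$; likewise $-\ell=-n$ in $K$ only gives $\ell\equiv n\pmod p$ in (2). The repair, which is what the paper does, is to use all orders simultaneously: $E_r\cdot 1=0$ for every $r\ge 1$, so Eulerianness of $R(\ell)$ forces $\binom{-\ell}{r}=\binom{0}{r}$ in $K$ for all $r$, and one then invokes the characteristic-free fact that $\binom{a}{r}=\binom{b}{r}$ in $K$ for every $r\in\mathbb{N}$ implies $a=b$ as integers (equivalently, $(1+x)^a=(1+x)^b$ in $K[[x]]$ forces $a=b$). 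The same correction fixes (2).
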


As an application of our theory of Eulerian graded $\scr{D}$-modules, we have the following result on local cohomology (proved in Section 5):
\begin{theorem}
\label{main theorem}
Let notations be as in the previous theorem. Then all socle elements of each $H^{i_0}_{\bm}(H^{i_1}_{J_1}(\cdots(H^{i_s}_{J_s}(R)))$ must have degree $-n$, and consequently each $H^{i_0}_{\bm}(H^{i_1}_{J_1}(\cdots(H^{i_s}_{J_s}(R)))$ is isomorphic (as a graded $\scr{D}$-module) to a direct sum of copies of $\sideset{^*}{}\E(n)$.
\end{theorem}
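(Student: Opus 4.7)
The plan is to chain the three parts of Theorem 1.1 together with a direct socle computation forced by the Eulerian condition. First, since each $J_k$ is homogeneous and $\bm = (x_1,\ldots,x_n)$ is homogeneous, I would apply Theorem 1.1(3) iteratively—starting from $R$, then applying $H^{i_s}_{J_s}$, $H^{i_{s-1}}_{J_{s-1}}$, $\ldots$, $H^{i_1}_{J_1}$, and finally $H^{i_0}_{\bm}$—to conclude that $N := H^{i_0}_{\bm}H^{i_1}_{J_1}\cdots H^{i_s}_{J_s}(R)$ is Eulerian.

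Next, I would take a homogeneous socle element $z \in N$ of degree $d$ (so $x_i z = 0$ for all $i$) and argue that $d = -n$. The Eulerian condition supplies, for each integer $t \geq 0$, a higher-order Euler identity of the form
\[
\sum_{i_1+\cdots+i_n = t} x_1^{i_1}\cdots x_n^{i_n}\,\partial_1^{[i_1]}\cdots\partial_n^{[i_n]}\, z \;=\; \binom{d}{t}\, z.
\]
On the other hand, using the divided-power commutation $[\partial_j^{[k]},x_j] = \partial_j^{[k-1]}$, one can push each $x_j^{i_j}$ past the differential operators; every rearranged summand that still contains any $x_j$ on the right kills $z$, and the surviving ``fully swept'' contribution evaluates to $(-1)^t \binom{n+t-1}{t}\, z = \binom{-n}{t}\, z$. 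Comparing both sides yields $\binom{d}{t} = \binom{-n}{t}$ for every $t \geq 0$. In characteristic zero, the case $t = 1$ alone already forces $d = -n$; in characteristic $p > 0$, taking $t = p^k$ for $k = 0, 1, 2,\ldots$ and applying Lucas's theorem recovers the $p$-adic digits of $d$ one at a time, again yielding $d = -n$.

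Finally, $N$ is supported at $\bm$ (it is a local cohomology module at $\bm$), so as a graded $R$-module it is injective and decomposes as a direct sum of shifted copies of $\sideset{^*}{}\E$. Since $\sideset{^*}{}\E(\ell)$ has its socle concentrated in degree $-\ell$, the constraint that every socle element of $N$ lives in degree $-n$ forces every summand to be $\sideset{^*}{}\E(n)$, finishing the argument.

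The main obstacle is the combinatorial bookkeeping in the middle step: one must organize the commutator calculation carefully so that the signs and multiplicities of the terms that survive after the relations $x_j z = 0$ correctly produce $\binom{-n}{t}$. The positive-characteristic case additionally hinges on having access to the full hierarchy of higher-order Euler identities rather than just the classical first-order one, since the latter determines $d$ only modulo $p$.
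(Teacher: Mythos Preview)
Your first step coincides with the paper's. Your middle step --- showing directly that $E_t\cdot z=\binom{-n}{t}z$ for any homogeneous $z$ annihilated by $\bm$ in any Eulerian module, via the commutation $[\partial_j^{[k]},x_j]=\partial_j^{[k-1]}$ --- is correct and is a genuinely different route from the paper's. The paper does not compute $E_t$ on an abstract socle element; instead it first decomposes $N\cong\bigoplus_j\sideset{^*}{}\E(n_j)$ as a graded $\scr{D}$-module, notes each summand inherits the Eulerian property, and then appeals to the earlier result that $\sideset{^*}{}\E(\ell)$ is Eulerian only when $\ell=n$ (proved there by evaluating $E_r$ on the explicit monomial basis $x_1^{j_1}\cdots x_n^{j_n}$). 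Your computation is arguably more intrinsic: it pins down the socle degree without choosing a basis or knowing the decomposition in advance, and the identity $x_j^{a}\partial_j^{[a]}z=(-1)^{a}z$ for $x_jz=0$ immediately yields $E_t\cdot z=(-1)^t\binom{n+t-1}{t}z=\binom{-n}{t}z$ after summing over compositions.

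There is, however, a gap in your final step. The claim ``$N$ is supported at $\bm$, so as a graded $R$-module it is injective'' is false in general: $R/\bm$ is $\bm$-torsion but not injective. What actually forces injectivity here is the $\scr{D}$-module structure. The paper supplies this as a separate proposition: choosing a homogeneous $K$-basis $\{e_j\}$ of the socle yields a graded $\scr{D}$-linear map $\bigoplus_j\scr{D}/\scr{D}\bm(n_j)\to N$ sending $1\mapsto e_j$; this map is injective (isomorphism on socles, source supported at $\bm$) and splits over $R$ because $\scr{D}/\scr{D}\bm\cong\sideset{^*}{}\E$ is $R$-injective, and the complement has zero socle, hence vanishes. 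You need this (or an equivalent argument) before you can pass from ``every socle element sits in degree $-n$'' to the decomposition $N\cong\bigoplus\sideset{^*}{}\E(n)$.
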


This result is characteristic-free, in particular it gives a positive answer to a question stated in \cite{YiZhangGradedFModules} and recovers the main theorem in \cite{YiZhangGradedFModules}.

The paper is organized as follows. In Section 2, Eulerian graded $\scr{D}$-modules are defined over an arbitrary field and some basic properties of these modules are discussed. In Section 3 and Section 4, we consider Eulerian graded $\scr{D}$-modules in characteristic 0 and characteristic $p$, respectively; in particular, we show in Section 4 that each graded $F$-module (introduced in \cite{YiZhangGradedFModules}) is Eulerian. In Section 5, we apply our theory of Eulerian graded $\scr{D}$-modules to local cohomology modules; Theorem \ref{main theorem} is proved in this section. Finally, in Section 6, an application of our theory to graded injecitve hull of $R/P$, where $P$ is a homogeneous prime ideal, is considered.

We finish our introduction by fixing our notation throughout the paper as follows. $R=K[x_1,\dots,x_n]$ denotes the polynomial ring in $n$ indeterminates over a field $K$.  The $j$-th order differential operator $\D\frac{1}{j!}\cdot\frac{\partial^j}{\partial x_i^j}$ with respect to $x_i$  is denoted by $\partial_i^{[j]}$ and $\scr{D}=R\langle \partial_i^{[j]}| 1\leq i\leq n, 1\leq j\rangle$ denotes the ring of differential operators over $R$. It follows from \cite[Corollary 2.2]{LyubeznikCharFreeDModules} that every element of $\scr{D}$ may be uniquely written as a linear combinations of monomials in $x$'s and $\partial$'s. The natural $\mathbb{Z}$-grading on $R$ and $\scr{D}$ is given by
\[\deg(x_i)=1,\ \deg(\partial_i^{[j]})=-j,\ \deg(c)=0\]
for each $x_i$, $\partial_i^{[j]}$ and nonzero $c\in K$ (it is evident that $R$ is a graded $\scr{D}$-module).

A graded $\scr{D}$-module is a left $\scr{D}$-module with a $\mathbb{Z}$-grading that is compatible with the natural $\mathbb{Z}$-grading on $\scr{D}$. Given any graded $\scr{D}$-module $M$, the module $M(\ell)$ denotes $M$ with degree shifted by $\ell$, {\it i.e.} $M(\ell)_i=M_{\ell+i}$ for each $i$.

The irrelevant maximal ideal $(x_1,\dots,x_n)$ of $R$ is denoted by
$\bm$. The graded injective hull of $R/\bm$ is denoted by
$\sideset{^*}{}\E$. It may be identified with the $K$-vector space with a basis $\{\frac{1}{x^{e_1}_1\cdots x^{e_n}_n}\vert e_1,\dots,e_n\geq 1\}$ and it has a natural $\scr{D}$-module structure given by
\[\partial_j\cdot \frac{1}{x^{e_1}_1\cdots x^{e_n}_n} =  \frac{-e_j}{x^{e_1}_1\cdots x^{e_j+1} \cdots x^{e_n}_n}. \]
$\sideset{^*}{}\E$ is graded to the effect that the element
$\D\frac{1}{x_1\cdots x_n}$ has degree 0 ({\it cf.} \cite[Example
13.3.9]{BrodmannSharpLocalCohomology}).

For each integer $a$ and a nonnegative integer $b$, we will use $\D\binom{a}{b}$ to denote $\D\frac{a\cdot(a-1)\cdots(a-b+1)}{b!}$ (note that this number is still well-defined when $\Char(K)=p>0$).

\section*{Acknowledgement} The authors would like to thank Professor Gennady Lyubeznik for carefully reading a preliminary version of the paper and for his valuable suggestions which improved the paper considerably. The authors are also grateful to the anonymous referee for her/his comments on the paper.

\section{Eulerian graded $\scr{D}$-modules}
In this section, we introduce Eulerian graded $\scr{D}$-modules and discuss some of their basic properties. We begin with the following definition.

\begin{definition}
\label{Eulerian D-modules}
The $r$-th Euler operator, denoted by $E_r$, is defined as
\[E_r:=\D\sum_{i_1+i_2+\cdots+i_n=r,i_1\geq 0,\dots,i_n\geq 0}x_1^{i_1}\cdots x_n^{i_n}\partial_1^{[i_1]}\cdots\partial_n^{[i_n]}.\]
In particular $E_1$ is the usual Euler operator $\sum_{i=1}^nx_i\partial_i$.

A graded $\scr{D}$-module $M$ is called {\it Eulerian}, if each homogeneous element $z\in M$ satisfies
\begin{equation}
\label{Euler's formula}
E_r\cdot z=\binom{\deg(z)}{r}\cdot z
\end{equation}
for every $r\geq 1$.
\end{definition}

We start with an easy lemma.
\begin{lemma}
\label{combine two operators}
For all positive integers $s$ and $t$, we have $\D\partial^{[s]}_i\partial^{[t]}_i=\binom{s+t}{s}\partial^{[s+t]}_i$.
\end{lemma}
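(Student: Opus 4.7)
The plan is to reduce the operator identity to a numerical identity of binomial coefficients that holds in any characteristic. Since both $\partial_i^{[s]}\partial_i^{[t]}$ and $\partial_i^{[s+t]}$ are $K$-linear operators that treat the variables $x_j$ for $j\neq i$ as scalars, it suffices to verify the equality when both sides are applied to an arbitrary monomial $x_i^k$ with $k\geq 0$.

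First I would record the basic formula $\partial_i^{[j]}(x_i^k)=\binom{k}{j}x_i^{k-j}$, valid for all nonnegative integers $j,k$ in any characteristic (with the convention $\binom{k}{j}=0$ when $j>k$). In characteristic $0$ this is immediate from $\partial_i^{[j]}=\frac{1}{j!}\partial^j/\partial x_i^j$; in characteristic $p$ it is the defining property of the divided power operator, since $\binom{k}{j}$ is an integer and hence well-defined in $K$.

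Next I would apply both sides of the claimed identity to $x_i^k$. On the left,
\[
\partial_i^{[s]}\partial_i^{[t]}(x_i^k)=\partial_i^{[s]}\!\left(\binom{k}{t}x_i^{k-t}\right)=\binom{k}{t}\binom{k-t}{s}x_i^{k-s-t},
\]
while on the right,
\[
\binom{s+t}{s}\partial_i^{[s+t]}(x_i^k)=\binom{s+t}{s}\binom{k}{s+t}x_i^{k-s-t}.
\]
Comparing coefficients reduces the lemma to the identity
\[
\binom{k}{t}\binom{k-t}{s}=\binom{s+t}{s}\binom{k}{s+t},
\]
which both expand to $\frac{k!}{s!\,t!\,(k-s-t)!}$ over $\mathbb{Z}$ (when $k\geq s+t$; otherwise both sides vanish). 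Because this is an equality of integers, it descends to any field $K$.

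I do not expect a real obstacle here. The only point worth being careful about is the reduction step: $\scr{D}$ contains operators involving the other variables, but since $\partial_i^{[s]}$ and $\partial_i^{[t]}$ act trivially on $x_j$ for $j\neq i$ and $R$ is a free $K$-module on the monomials $x_1^{k_1}\cdots x_n^{k_n}$, checking the identity on $x_i^k$ suffices. The characteristic-free step is the integrality of $\binom{k}{t}\binom{k-t}{s}=\binom{s+t}{s}\binom{k}{s+t}$, which is a standard combinatorial identity (counting the ways to choose an ordered pair of disjoint subsets of sizes $t$ and $s$ from a $k$-set in two ways).
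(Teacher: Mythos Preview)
Your proof is correct. The paper's own argument is a one-liner: it simply writes
\[
\partial_i^{[s]}\partial_i^{[t]}=\frac{\partial_i^s}{s!}\cdot\frac{\partial_i^t}{t!}=\binom{s+t}{s}\frac{\partial_i^{s+t}}{(s+t)!}=\binom{s+t}{s}\partial_i^{[s+t]},
\]
remarking that this holds ``in all characteristic.'' This is really the rational identity $\tfrac{1}{s!\,t!}=\binom{s+t}{s}\tfrac{1}{(s+t)!}$, together with the implicit observation that the divided power operators are defined over $\mathbb{Z}$, so the relation descends to any base field.

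Your route---evaluating both operators on monomials $x_i^k$ and reducing to the integer identity $\binom{k}{t}\binom{k-t}{s}=\binom{s+t}{s}\binom{k}{s+t}$---is a different and more explicit justification of exactly that descent step. The paper's version is shorter; yours makes the characteristic-$p$ validity transparent without appealing to any lift to $\mathbb{Q}$. Both arguments are standard and either would be acceptable here.
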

\begin{proof}
It is easy to check that (in all characteristic) \[\partial^{[s]}_i\partial^{[t]}_i=\frac{\partial^s_i}{s!}\frac{\partial^t_i}{t!}=\binom{s+t}{s}\frac{\partial^{s+t}_i}{(s+t)!}=\binom{s+t}{s}\partial^{[s+t]}_i.\]
\end{proof}

The following lemma is a special case of \cite[Proposition 2.1]{LyubeznikCharFreeDModules}, which will be needed in the sequel.

\begin{lemma}
\label{swapping x and d}
For all positive integers $s$ and $t$, we have
\[\partial^{[s]}_ix^t_i=\sum_{j=0}^{\min\{s,t\}}\binom{t}{j}x_i^{t-j}\partial^{[s-j]}_i\]
for each $i$.
\end{lemma}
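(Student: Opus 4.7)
My plan is to verify the identity as elements of $\scr{D}$ by checking agreement as operators on $R$. Since $x_i$ and the divided-power operators $\partial_i^{[s]}$ commute with $x_k$ and $\partial_k^{[t]}$ whenever $k \neq i$, we may reduce at once to the one-variable case: drop the subscript, write $x = x_i$ and $\partial^{[s]} = \partial_i^{[s]}$, and recall the basic formula $\partial^{[s]}(x^a) = \binom{a}{s} x^{a-s}$ for every integer $a \geq 0$. This formula holds in arbitrary characteristic, with the convention that the right side vanishes when $a < s$ (since then $\binom{a}{s} = 0$).

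For any $a \geq 0$, the left-hand side of the claimed identity sends $x^a$ to
\[
\partial^{[s]}(x^{t+a}) = \binom{t+a}{s}\, x^{t+a-s},
\]
while the right-hand side sends $x^a$ to
\[
\sum_{j=0}^{\min\{s,t\}} \binom{t}{j}\, x^{t-j}\, \partial^{[s-j]}(x^a) = \Bigl(\sum_{j=0}^{\min\{s,t\}} \binom{t}{j}\binom{a}{s-j}\Bigr)\, x^{t+a-s}.
\]
Equality of the two coefficients is exactly the Vandermonde convolution $\binom{t+a}{s} = \sum_{j=0}^{s}\binom{t}{j}\binom{a}{s-j}$ (the upper limit can be extended freely since $\binom{t}{j} = 0$ for $j > t$). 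This is an identity in $\mathbb{Z}$, so it descends to $K$ regardless of characteristic.

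It remains to promote agreement on every monomial $x^a$ to equality in $\scr{D}$, which is the faithfulness of the natural map $\scr{D} \hookrightarrow \Hom_K(R,R)$. This is standard: every element of $\scr{D}$ has a unique expression as an $R$-linear combination of products $\prod_i \partial_i^{[j_i]}$, and distinct such products act linearly independently on monomials of sufficiently high multi-degree. The faithfulness step is the only real obstacle to this plan; with it in hand the proof is immediate, and no induction on $s$ or $t$ is required. If one prefers to avoid appealing to faithfulness, the same identity can be reached by a double induction starting from the base case $\partial^{[s]} x = x\partial^{[s]} + \partial^{[s-1]}$ (which follows from Lemma \ref{combine two operators}), but the operator-action argument via Vandermonde is shorter and manifestly characteristic-free.
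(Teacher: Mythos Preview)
Your argument is correct and takes a genuinely different route from the paper. The paper proves the identity by induction on $t$: the base case $\partial^{[s]}x = x\partial^{[s]} + \partial^{[s-1]}$ is obtained from the Leibniz rule $\partial^{[s]}(x_i f) = \sum_{j=0}^s(\partial^{[j]}x_i)(\partial^{[s-j]}f)$, and the inductive step expands $\partial^{[s]}x^{t+1} = (\partial^{[s]}x^t)\cdot x$ and collects terms via Pascal's identity. Your approach instead evaluates both sides on every monomial $x^a$ and reduces the question to the Vandermonde convolution $\binom{t+a}{s} = \sum_j\binom{t}{j}\binom{a}{s-j}$, then invokes faithfulness of $\scr{D}$ on $R$. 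This is shorter and makes transparent that the lemma is essentially a restatement of Vandermonde; the paper's induction is more self-contained but hides that structure. Note that faithfulness is really free here: in the paper $\scr{D}$ is defined as the subring of $\End_K(R)$ generated by $R$ and the $\partial_i^{[j]}$, so two elements of $\scr{D}$ agreeing on all of $R$ are equal by definition, and your paragraph justifying it is more than is needed.

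One small inaccuracy in your closing remark: the base case $\partial^{[s]}x = x\partial^{[s]} + \partial^{[s-1]}$ does not follow from Lemma~\ref{combine two operators} (which concerns products $\partial^{[s]}\partial^{[t]}$, not commutation with $x$); it comes from the Leibniz rule for divided powers. Since this is only a parenthetical aside and not part of your actual argument, it does not affect the validity of your proof.
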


The following proposition indicates a connection among Euler operators.
\begin{proposition}
\label{relation about E_r and E_r+1}
For every $r\geq 1$, we have $E_1\cdot E_r=(r+1)E_{r+1}+rE_r$.
\end{proposition}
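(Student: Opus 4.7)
The plan is to expand $E_1\cdot E_r$ summand by summand, commute each $\partial_i$ coming from $E_1$ past the monomial factor of a typical summand of $E_r$ using $\partial_i x_i^m = x_i^m\partial_i + m x_i^{m-1}$, collapse the resulting product of divided power operators via Lemma \ref{combine two operators}, and then reorganize the double sum into two pieces corresponding to $E_r$ and $E_{r+1}$.

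More concretely, for a multi-index $I=(i_1,\dots,i_n)$ with $|I|:=i_1+\cdots+i_n=r$, I would write $x^I:=x_1^{i_1}\cdots x_n^{i_n}$ and $\partial^{[I]}:=\partial_1^{[i_1]}\cdots\partial_n^{[i_n]}$, and let $e_i$ denote the $i$-th standard basis vector. Since $\partial_i$ only affects the $x_i$-factor, we have $\partial_i\cdot x^I = x^I\partial_i + i_i\,x^{I-e_i}$ and hence $x_i\partial_i\cdot x^I = x^{I+e_i}\partial_i + i_i\,x^I$. Multiplying on the right by $\partial^{[I]}$ and applying Lemma \ref{combine two operators} in the form $\partial_i\partial_i^{[i_i]}=(i_i+1)\partial_i^{[i_i+1]}$ yields
\[
x_i\partial_i\cdot x^I\partial^{[I]} \;=\; (i_i+1)\,x^{I+e_i}\partial^{[I+e_i]} \;+\; i_i\, x^I\partial^{[I]}.
\]

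Next I would sum this over $i=1,\dots,n$ and over all $I$ with $|I|=r$ to obtain $E_1\cdot E_r$, and split the result into two pieces. The terms $i_i\, x^I\partial^{[I]}$ contribute $\sum_{|I|=r}\bigl(\sum_i i_i\bigr)x^I\partial^{[I]}=rE_r$, since $\sum_i i_i=r$. For the remaining piece, reindex by $J:=I+e_i$, so that $|J|=r+1$; as $(I,i)$ varies, each such $J$ is hit once for every $i$ with $J_i\geq 1$ (with the convention that $J_i=0$ contributes a zero summand), and the coefficient $(i_i+1)$ becomes $J_i$. Summing over $i$ then gives $\sum_i J_i=r+1$, so this piece equals $(r+1)E_{r+1}$, and adding the two produces the claimed identity.

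The identity is essentially combinatorial, and the only real obstacle is keeping the multi-index bookkeeping straight — in particular verifying that the coefficients $(i_i+1)$ produced by the divided power combination rule reassemble, after summing over $i$, into the single factor $r+1$ in front of each $x^J\partial^{[J]}$ with $|J|=r+1$.
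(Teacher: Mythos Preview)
Your proof is correct and follows essentially the same route as the paper: commute each $x_i\partial_i$ from $E_1$ past the monomial $x^I$, apply Lemma \ref{combine two operators} to collapse $\partial_i\partial_i^{[i_i]}$, and split the result into the $rE_r$ piece (from $\sum_i i_i=r$) and the $(r+1)E_{r+1}$ piece. The only cosmetic difference is that you package things in multi-index notation and spell out the reindexing $J=I+e_i$ with coefficient $J_i$ more explicitly than the paper does.
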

\begin{proof}
By Lemma \ref{combine two operators} we know $\partial_i\partial_i^{[j]}=(j+1)\partial_i^{[j+1]}$. Now we have

\begin{align}
E_1\cdot E_r &=\sum_jx_j\partial_j\cdot\sum_{i_1+i_2+\cdots+i_n=r}x_1^{i_1}x_2^{i_2}\cdots x_n^{i_n}\partial_1^{[i_1]}\partial_2^{[i_2]}\cdots\partial_n^{[i_n]}\notag\\
&=\sum_j\sum_{i_1+i_2+\cdots+i_n=r}x_1^{i_1}\cdots(x_j\partial_j x_j^{i_j})\cdots x_n^{i_n}\partial_1^{[i_1]}\partial_2^{[i_2]}\cdots\partial_n^{[i_n]}\notag\\
&=\sum_j\sum_{i_1+i_2+\cdots+i_n=r}x_1^{i_i}\cdots(x_j^{i_j+1}\partial_j+i_jx_j^{i_j})\cdots x_n^{i_n}\partial_1^{[i_1]}\partial_2^{[i_2]}\cdots\partial_n^{[i_n]}\notag\ {\rm by\ lemma\ \ref{swapping x and d}}\\
&=\sum_{i_1+i_2+\cdots+i_n=r}\sum_jx_1^{i_1}\cdots x_j^{i_j+1}\cdots x_n^{i_n}\partial_1^{[i_1]}\cdots(\partial_j\partial_j^{[i_j]})\cdots\partial_n^{[i_n]}\notag\\
&\quad +\sum_{i_1+i_2+\cdots+i_n=r}\sum_ji_jx_1^{i_1}\cdots x_n^{i_n}\partial_1^{[i_1]}\cdots\partial_n^{[i_n]}\notag\\
&=\sum_{i_1+i_2+\cdots+i_n=r}\sum_j(i_j+1)x_1^{i_1}\cdots x_j^{i_j+1}\cdots x_n^{i_n}\partial_1^{[i_1]}\cdots\partial_j^{[i_j+1]}\cdots\partial_n^{[i_n]}+ rE_r\notag\\
&=(r+1)E_{r+1}+rE_r\notag
\end{align}
\end{proof}

Some remarks are in order.

\begin{remark}
\label{basic remarks}
\begin{enumerate}
\item If $M$ is an Eulerian graded $\scr{D}$-module, then $M(\ell)$ is Eulerian if and only if $\ell=0$ (equivalently, for any graded $\scr{D}$-module $M$, $M(\ell)$ is Eulerian graded for at most one $\ell$). Because in any characteristic, we have $\D\binom{a}{r}=\binom{b}{r}$ for all $r\in \mathbb{N}$ if and only if $a=b$. This is trivial in characteristic $0$ (take $r=1$). We give a short argument in characteristic $p>0$. If both $a$ and $b$ are positive, then by a theorem of Lucas \cite{Lucascombinatorialnumber}, we have \[\D\binom{a}{r}=\prod\binom{a_i}{r_i}\] where $a=\sum a_ip^i$ and $r=\sum r_ip^i$ are the $p$-adic decompositions of $a$ and $r$. In particular, if we apply $\D\binom{a}{r}=\binom{b}{r}$ to $r=p^i$, we get $a_i=b_i$ for every $i$, hence $a=b$. When $a$ and $b$ are both negative, we look at the $p$-adic decomposition of $-a-1=\sum a_ip^i$ and $-b-1=\sum b_ip^i$. Taking $r=p^i$, we get \[(-1)^{p^i}\cdot(a_i+1)=(-1)^{p^i}\binom{-a+p^i-1}{p^i}=\binom{a}{p^i}=\binom{b}{p^i}=\binom{-b+p^i-1}{p^i}=(-1)^{p^i}\cdot(b_i+1).\] Hence again we have $a_i=b_i$ for every $i$, $-a-1=-b-1$ so $a=b$. Therefore it suffices to show $a$ and $b$ must have the same sign. But if $a>0$ and $b<0$, we can pick $r=p^j\gg a-b$, then direct computation (or using the theorem of Lucas \cite{Lucascombinatorialnumber}) gives $\D\binom{a}{p^j}=0$ while $\D\binom{b}{p^j}=(-1)^{p^j}\binom{-b+p^j-1}{p^j}=(-1)^{p^j}$, which is a contradiction.

\item Definition \ref{Eulerian D-modules} does not depend on the characteristic of $K$. However we will see in section 3 that, in characteristic $0$, we only need to consider $E_1$, the usual Euler operator.
\item $R$ is Eulerian and a proof goes as follows. For each monomial $x_1^{j_1}x_2^{j_2}\cdots x_n^{j_n}$, where $j_n$'s are arbitrary integers (we allow negative integers), we have (for each $r\geq 1$)
\begin{eqnarray*}
E_r\cdot x_1^{j_1}x_2^{j_2}\cdots x_n^{j_n}&=& (\sum_{i_1+i_2+\cdots+i_n=r}x_1^{i_1}x_2^{i_2}\cdots x_n^{i_n}\partial_1^{[i_1]}\partial_2^{[i_2]}\cdots\partial_n^{[i_n]})\cdot(x_1^{j_1}x_2^{j_2}\cdots x_n^{j_n})\\ &=&\sum_{i_1+i_2+\cdots+i_n=r}\binom{j_1}{i_1}\cdots\binom{j_n}{i_n}\cdot x_1^{j_1}x_2^{j_2}\cdots x_n^{j_n}\\
&=&\binom{j_1+\cdots+j_n}{r}\cdot x_1^{j_1}x_2^{j_2}\cdots x_n^{j_n}
\end{eqnarray*}
We explain the last equality: we use an induction argument. It is clear when $r=1$, now suppose we have the equality for $r-1$. Then
\begin{eqnarray*}
&&\sum_{i_1+i_2+\cdots+i_n=r}\binom{j_1}{i_1}\cdots\binom{j_n}{i_n}\\
&=&\sum_{i_n=0}^r\binom{j_n}{i_n}\sum_{i_1+i_2+\cdots+i_{n-1}=r-i_n}\binom{j_1}{i_1}\cdots\binom{j_{n-1}}{i_{n-1}}\\
&=&\sum_{i_n=0}^r\binom{j_n}{i_n}\binom{j_1+\cdots+j_{n-1}}{r-i_n}\\
&=&\binom{j_1+\cdots+j_n}{r}
\end{eqnarray*}
where the second equality is by induction hypothesis for $r-1$, and the last equality is by the Chu-Vandermonde identity (see \cite{AskeyOrthogonalPolynomial}) \[\D\binom{a}{r}=\sum_{k=0}^r\binom{j}{k}\binom{a-j}{r-k}.\]
Since $x_1^{j_1}x_2^{j_2}\cdots x_n^{j_n}$ clearly has degree $j_1+\cdots+j_n$ in $R$ and the $E_r$'s clearly preserve addition, we can see from the computation above that $\D E_r\cdot z=\binom{\deg(z)}{r}\cdot z$ for every homogenous $z\in R$. Therefore, $R$ is Eulerian.
\item $\scr{D}$ is {\it not} Eulerian. It is clear that the identity $1\in \scr{D}$ is homogeneous with degree 0, but $E_1\cdot 1=(\sum^n_{i=1}x_i\partial_i)\cdot 1=\sum^n_{i=1}x_i\partial_i\neq 0$.
\end{enumerate}
\end{remark}

One of the main results in this section is that, to check whether a graded $\scr{D}$-module is Eulerian, it suffices to check whether each element of any set of homogeneous generators satisfies (\ref{Euler's formula}).
\begin{theorem}
\label{Eulerian is detemined by generators}
Let $M$ be a graded $\scr{D}$-module. Assume that $\{g_1,g_2,\dots\}$ is a set of homogeneous $\scr{D}$-generators of $M$. Then, $M$ is Eulerian if and only if each $g_j$ satisfies
Euler formula (\ref{Euler's formula}) for every $r$.
\end{theorem}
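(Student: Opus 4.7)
The forward implication is immediate, since each generator $g_j$ is a homogeneous element of $M$ and so must satisfy (\ref{Euler's formula}) if $M$ is Eulerian. For the converse, the plan is to introduce the $K$-linear subspace
\[
M' \,:=\, \{z\in M : z \text{ is a finite sum of homogeneous elements satisfying } (\ref{Euler's formula})\}
\]
and show that it is a $\scr{D}$-submodule of $M$. Since each $g_j$ lies in $M'$ by hypothesis and $\{g_j\}$ generates $M$ over $\scr{D}$, this will force $M' = M$, proving $M$ is Eulerian.

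To check $\scr{D}$-stability of $M'$, observe that $\scr{D}$ is generated as a $K$-algebra by $x_1,\dots,x_n$ together with the divided-power operators $\partial_l^{[j]}$ ($l=1,\dots,n$, $j\geq 1$). A simple induction on the length of a monomial in these generators then reduces the problem to showing: if $z\in M$ is homogeneous of degree $d$ with $E_r z = \binom{d}{r}z$ for every $r\geq 1$, then (a) $E_r(x_l z) = \binom{d+1}{r}(x_l z)$ and (b) $E_r(\partial_l^{[j]} z) = \binom{d-j}{r}(\partial_l^{[j]} z)$ hold for every $l$, $j$, and $r\geq 1$. For (a), I will derive the commutator identity
\[
E_r\, x_l \;=\; x_l\, E_r + x_l\, E_{r-1} \qquad (\text{with } E_0 := 1)
\]
inside $\scr{D}$ by substituting the $t=1$ case of Lemma \ref{swapping x and d}, namely $\partial_l^{[i_l]} x_l = x_l\partial_l^{[i_l]} + \partial_l^{[i_l-1]}$, into the definition of $E_r$ and reindexing the correction sum via $i_l\mapsto i_l-1$. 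Applying this identity to $z$ and invoking Pascal's rule $\binom{d}{r}+\binom{d}{r-1}=\binom{d+1}{r}$ yields (a) at once.

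The main obstacle is statement (b), which hinges on the more intricate commutator
\[
E_r\, \partial_l^{[j]} \;=\; \sum_{s=0}^{r}\binom{-j}{s}\,\partial_l^{[j]}\, E_{r-s} \qquad \text{in } \scr{D},
\]
where $\binom{-j}{s}=(-1)^s\binom{j+s-1}{s}$. To establish it, I will use the full Lemma \ref{swapping x and d} to move $\partial_l^{[j]}$ past each $x_l^{a_l}$ appearing in $E_{r-s}$, apply Lemma \ref{combine two operators} to collapse the resulting adjacent divided-power factors, and then carefully reindex the ensuing multi-sum; the critical combinatorial step is the classical trinomial identity $\binom{a+k}{k}\binom{a+j}{a+k}=\binom{a+j}{a}\binom{j}{k}$, which isolates the coefficient $\binom{-j}{s}$ after summing on the inner index. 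Once the commutator is in hand, applying it to $z$ together with the Chu--Vandermonde convolution $\sum_{s=0}^{r}\binom{-j}{s}\binom{d}{r-s}=\binom{d-j}{r}$ -- a polynomial identity in $d$, and therefore valid in every characteristic -- produces (b). The combinatorial bookkeeping needed to assemble this commutator is the technical heart of the argument, though each individual manipulation reduces to a standard binomial identity.
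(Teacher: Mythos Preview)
Your argument is correct, and the overall scaffolding---reducing to the two stability statements (a) and (b) for a single homogeneous $z$---is exactly what the paper does as well. Your treatment of (a) via the operator identity $E_r x_l = x_l(E_r+E_{r-1})$ is identical in substance to the paper's computation for $x_1\cdot z$.

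The genuine difference is in (b). The paper proceeds by \emph{induction on $r$}: it commutes $x_l^{i_l}$ past $\partial_l^{[j]}$ using Lemma~\ref{swapping x and d} to obtain
\[
E_r\cdot(\partial_l^{[j]}z)=\binom{\deg z}{r}(\partial_l^{[j]}z)-\sum_{k=1}^{j}\binom{j}{k}\,E_{r-k}\cdot(\partial_l^{[j]}z),
\]
invokes the inductive hypothesis on the right-hand terms, and closes with the Vandermonde identity $\binom{a}{r}=\sum_k\binom{j}{k}\binom{a-j}{r-k}$. In operator form the paper is implicitly using $\partial_l^{[j]}E_r=\sum_{k}\binom{j}{k}E_{r-k}\partial_l^{[j]}$. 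Your route instead establishes the \emph{inverse} commutator $E_r\partial_l^{[j]}=\sum_s\binom{-j}{s}\partial_l^{[j]}E_{r-s}$ directly, and then applies it to $z$ with no induction on $r$ at all; the trinomial identity you cite, followed by $\sum_k\binom{-j}{m-k}\binom{j}{k}=\binom{0}{m}$, is exactly what is needed to verify this commutator, and the final Chu--Vandermonde step is the ``dual'' of the paper's. So your approach trades the induction for a somewhat heavier one-shot combinatorial computation; conceptually it is a little cleaner, since the whole proof becomes a pair of operator identities in $\scr{D}$ plus two Vandermonde convolutions, with no recursive structure. Either way the same two lemmas carry the load.
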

\begin{proof}
If $M$ is Eulerian, then it is clear that each $g_j$ satisfies Euler formula (\ref{Euler's formula}) for every $r$. Assume that each $g_j$ satisfies Euler formula (\ref{Euler's formula}) for every $r$ and we wish to prove that $M$ is Eulerian. To this end, it suffices to show that, if a homogeneous element $z\in M$ satisfies Euler formula (\ref{Euler's formula}) for every $r$, then so does $x^{s_1}_1\cdots x^{s_n}_n\partial^{[j_1]}_1\cdots \partial^{[j_n]}_n\cdot z$. And it is clear that it suffices to consider $x^{s}_i\partial^{[j]}_i\cdot z$. Without loss of generality, we may assume $i=1$. We will prove this in two steps; first we consider $\partial^{[j]}_1\cdot z$ and then $x^s_1\cdot z$ (once we finish our first step, we may replace $\partial^{[j]}\cdot z$ by $z$ and then our second step will finish the proof).

First we will use induction on $r$ to show that $\partial^{[j]}_1\cdot z$ satisfies Euler formula (\ref{Euler's formula}) for each $r$. When $r=1$, we compute
\begin{align}
E_1\cdot(\partial_1^{[j]}z)&=\sum_{i=1}^nx_i\partial_i\cdot(\partial_1^{[j]}z)\notag\\
&=x_1\partial_1^{[j]}\partial_1z+\partial_1^{[j]}\sum_{i\geq 2}x_i\partial_i\cdot z\notag\\
&=\partial_1^{[j]}x_1\partial_1z-\partial_1^{[j-1]}\partial_1z+\partial_1^{[j]}\sum_{i\geq 2}x_i\partial_i\cdot z\notag\\
&=\partial_1^{[j]}\sum_{i=1}^nx_i\partial_i\cdot z-j\partial_1^{[j]}z\notag\\
&=(\deg(z)-j)\cdot\partial_1^{[j]}z\notag
\end{align}
Now for general $r$, suppose we know that $E_{r-k}\cdot(\partial_1^{[j]}z)=\D\binom{\deg(z)-j}{r-k}\cdot \partial_1^{[j]}z$ for every $1\leq k\leq r-1$. Then we have
\begin{align}
&\qquad(\sum_{i_1+i_2+\cdots+i_n=r}x_1^{i_1}x_2^{i_2}\cdots x_n^{i_n}\partial_1^{[i_1]}\partial_2^{[i_2]}\cdots\partial_n^{[i_n]})\cdot
(\partial_1^{[j]}z)\notag\\
&=\sum_{i_1+i_2+\cdots+i_n=r}(x_1^{i_1}\partial_1^{[j]})x_2^{i_2}\cdots x_n^{i_n}\partial_1^{[i_1]}\partial_2^{[i_2]}\cdots\partial_n^{[i_n]}\cdot
z\notag\\
&=\sum_{i_1+i_2+\cdots+i_n=r}(\partial_1^{[j]}x_1^{i_1})x_2^{i_2}\cdots x_n^{i_n}\partial_1^{[i_1]}\partial_2^{[i_2]}\cdots\partial_n^{[i_n]}\cdot z\tag{1}
\\
&\qquad -\sum_{i_1+i_2+\cdots+i_n=r}\sum_{k=1}^{\min\{i_1,j\}}\binom{i_1}{k}x_1^{i_1-k}\partial_1^{[j-k]}x_2^{i_2}\cdots x_n^{i_n}\partial_1^{[i_1]}\partial_2^{[i_2]}\cdots\partial_n^{[i_n]}\cdot z\notag\\
&=\binom{\deg(z)}{r}\cdot(\partial_1^{[j]}z)-\sum_{i_1+i_2+\cdots+i_n=r}\sum_{k=1}^{\min\{i_1,j\}}x_1^{i_1-k}x_2^{i_2}\cdots x_n^{i_n}(\binom{i_1}{k}\partial_1^{[i_1]})\partial_2^{[i_2]}\cdots\partial_n^{[i_n]}\cdot (\partial_1^{[j-k]}z)\notag\\
&=\binom{\deg(z)}{r}\cdot(\partial_1^{[j]}z)-\sum_{i_1+i_2+\cdots+i_n=r}\sum_{k=1}^{\min\{i_1,j\}}x_1^{i_1-k}x_2^{i_2}\cdots x_n^{i_n}\partial_1^{[i_1-k]}\partial_2^{[i_2]}\cdots\partial_n^{[i_n]}\cdot (\partial_1^{[k]}\partial_1^{[j-k]}z)\tag{2}
\end{align}

\begin{align}
&=\binom{\deg(z)}{r}\cdot(\partial_1^{[j]}z)-\sum_{i_1+i_2+\cdots+i_n=r}\sum_{k=1}^{\min\{i_1,j\}}x_1^{i_1-k}x_2^{i_2}\cdots x_n^{i_n}\partial_1^{[i_1-k]}\partial_2^{[i_2]}\cdots\partial_n^{[i_n]}\cdot \binom{j}{k}(\partial_1^{[j]}z)\tag{3}\\
&=\binom{\deg(z)}{r}\cdot(\partial_1^{[j]}z)-\sum_{k=1}^j\binom{j}{k}\sum_{i_1'+i_2+\cdots+i_n=r-k}x_1^{i_1'}x_2^{i_2}\cdots x_n^{i_n}\partial_1^{[i_1']}\partial_2^{[i_2]}\cdots\partial_n^{[i_n]}\cdot(\partial_1^{[j]}z)\tag{4}\\
&=\binom{\deg(z)}{r}\cdot(\partial_1^{[j]}z)-\sum_{k=1}^j\binom{j}{k}\binom{\deg(z)-j}{r-k}\cdot(\partial_1^{[j]}z)\tag{5}\\
&=\binom{\deg(z)-j}{r}\cdot(\partial_1^{[j]}z)\tag{6}\\
&=\binom{\deg(\partial_1^{[j]}z)}{r}\cdot(\partial_1^{[j]}z)\notag
\end{align}
where (1) follows from Lemma \ref{swapping x and d}:
\[x_1^{i_1}\partial_1^{[j]}=\partial_1^{[j]}x_1^{i_1}-\sum_{k=1}^{\min\{i_1,j\}}\binom{i_1}{k}x_1^{i_1-k}\partial_1^{[j-k]};\]
(2) and (3) follow from Lemma \ref{combine two operators}; \\
(4) is obtained by setting $i_1'=i_1-k$;\\
(5) is true by induction on $r$;\\
(6) follows from the Chu-Vandermonde identity (\cite[page 59]{AskeyOrthogonalPolynomial}) \[\D\binom{a}{r}=\sum_{k=0}^j\binom{j}{k}\binom{a-j}{r-k}.\]
This completes our first step.

Next we consider $x_1\cdot z$ and we have
\begin{align}
&\qquad (\sum_{i_1+i_2+\cdots+i_n=r}x_1^{i_1}x_2^{i_2}\cdots x_n^{i_n}\partial_1^{[i_1]}\partial_2^{[i_2]}\cdots\partial_n^{[i_n]})\cdot (x_1\cdot z)\notag\\
&=(\sum_{i_1+i_2+\cdots+i_n=r}x_1^{i_1}x_2^{i_2}\cdots x_n^{i_n}(\partial_1^{[i_1]}x_1)\partial_2^{[i_2]}\cdots\partial_n^{[i_n]})\cdot z\notag\\
&=(\sum_{i_1+i_2+\cdots+i_n=r,i_1\geq 1}x_1^{i_1}x_2^{i_2}\cdots x_n^{i_n}(x_1\partial_1^{[i_1]}+\partial^{[i_1-1]}_1) \partial_2^{[i_2]}\cdots\partial_n^{[i_n]}+x_1\sum_{i_2+\cdots+i_n=r}x_2^{i_2}\cdots x_n^{i_n}\partial_2^{[i_2]}\cdots\partial_n^{[i_n]})\cdot z\notag\\
&= x_1(\sum_{i_1+i_2+\cdots+i_n=r}x_1^{i_1}x_2^{i_2}\cdots x_n^{i_n}\partial_1^{[i_1]} \partial_2^{[i_2]}\cdots\partial_n^{[i_n]}+\sum_{i_1+i_2+\cdots+i_n=r,i_1\geq 1}x^{i_1-1}_1x^{i_2}_2\cdots x^{i_n}_n\partial^{[i_1-1]}_1\partial^{[i_2]}_2\cdots \partial^{[i_n]}_n)\cdot z\notag \\
&= x_1(\binom{\deg(z)}{r}+\binom{\deg(z)}{r-1})z\notag\\
&= \binom{\deg(z)+1}{r}(x_1\cdot z)\notag\\
&= \binom{\deg(x_1\cdot z)}{r}(x_1\cdot z)\notag
\end{align}
This finishes our second step in the case when $s=1$.

Now we consider $x^s_1\cdot z$ when $s\geq 2$. By an easy induction we may assume $x^{s-1}_1z$ satisfies (\ref{Euler's formula}), now we have
\[E_r(x^{s}_1z)=E_r(x_1x^{s-1}_1z)=\binom{\deg(x_1(x^{s-1}z))}{r}(x_1(x^{s-1}z))=\binom{\deg(x^s_1z)}{r}(x^s_1z).\]
where the second equality is the case when $s=1$ (because we assume $x^{s-1}_1z$ satisfies (\ref{Euler's formula})). This completes the proof of our theorem.
\end{proof}

An immediate consequence of Theorem \ref{Eulerian is detemined by generators} on cyclic $\scr{D}$-modules is the following.
\begin{proposition}
\label{D/J is Eulerian iff E_r in J}
Let $J$ be a homogeneous left ideal in $\scr{D}$. Then
$\D\frac{\scr{D}}{J}$ is Eulerian if and only if
$\D E_r=\sum_{i_1+i_2+\cdots+i_n=r}x_1^{i_1}x_2^{i_2}\cdots x_n^{i_n}\partial_1^{[i_1]}\partial_2^{[i_2]}\cdots\partial_n^{[i_n]} \in J$ for every $r$.
\end{proposition}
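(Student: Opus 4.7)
The plan is to apply Theorem 2.6 directly to the single generator $\bar 1 \in \scr{D}/J$. Since $J$ is a homogeneous left ideal, $\scr{D}/J$ inherits a $\Z$-grading from $\scr{D}$, and as a $\scr{D}$-module it is generated by the homogeneous element $\bar 1$ of degree $0$. By Theorem \ref{Eulerian is detemined by generators}, $\scr{D}/J$ is Eulerian if and only if $\bar 1$ satisfies the Euler formula (\ref{Euler's formula}) for every $r \geq 1$.

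Now I would unpack what this condition means for $\bar 1$. Because $\deg(\bar 1) = 0$, the required identity reads
\[
E_r \cdot \bar 1 = \binom{0}{r} \bar 1 = 0 \quad \text{in } \scr{D}/J,
\]
since $\binom{0}{r} = 0$ for every $r \geq 1$ (this holds regardless of the characteristic of $K$). Translating back to $\scr{D}$, the equality $E_r \cdot \bar 1 = 0$ in $\scr{D}/J$ is the same as $E_r \in J$. Running this equivalence for every $r \geq 1$ gives exactly the stated condition.

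There is essentially no obstacle here: the content lies entirely in Theorem \ref{Eulerian is detemined by generators}, which reduces the verification of the Euler formulas on all of $M$ to a check on a set of homogeneous generators. Once that reduction is in hand, the cyclic case becomes a one-line computation using $\binom{0}{r}=0$. The only small point to mention in the writeup is that the grading on $\scr{D}/J$ is well-defined precisely because $J$ is homogeneous, so that $\bar 1$ is genuinely a homogeneous element of degree $0$.
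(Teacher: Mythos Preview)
Your proof is correct and follows essentially the same approach as the paper: apply Theorem \ref{Eulerian is detemined by generators} to the single homogeneous generator $\overline{1}$ of degree $0$, and observe that $\binom{0}{r}=0$ for $r\geq 1$ so the Euler formula for $\overline{1}$ reduces to $E_r\in J$.
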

\begin{proof}
According to Theorem \ref{Eulerian is detemined by generators}, $\D\frac{\scr{D}}{J}$ is
Eulerian if and only if $\D\overline{1}\in \frac{\scr{D}}{J}$
satisfies Euler's formula (\ref{Euler's formula}). Since
$\overline{1}$ has degree 0, $\overline{1}\in \D\frac{\scr{D}}{J}$
satisfies Euler's formula (\ref{Euler's formula}) if and only if
$E_r\cdot
\overline{1}=\overline{E_r}=0\in\D
\frac{\scr{D}}{J}$, which holds if and only if
$E_r \in J$ for every $r$.
\end{proof}

\begin{proposition}
\label{submodules and quotients}
If a graded $\scr{D}$-module $M$ is Eulerian, so are each graded submodule of $M$ and each graded quotient of $M$.
\end{proposition}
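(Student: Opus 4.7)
The plan is to verify the Euler formula (\ref{Euler's formula}) directly on homogeneous elements, since the Eulerian condition is preserved under both inclusions and quotient maps in a trivial way, as long as the grading is respected.

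First I would handle a graded submodule $N \hookrightarrow M$. Any homogeneous element $z \in N$ of degree $d$ is, in particular, a homogeneous element of $M$ of the same degree $d$. Since $M$ is Eulerian, we have $E_r \cdot z = \binom{d}{r} z$ in $M$ for every $r \geq 1$. Because $N$ is a $\scr{D}$-submodule, $E_r \cdot z$ is computed in $N$ via the same formula, so the identity descends verbatim. Hence $N$ is Eulerian.

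For a graded quotient $\pi\colon M \twoheadrightarrow M/N$ with $N \subseteq M$ a graded $\scr{D}$-submodule, I would start with an arbitrary homogeneous $\bar z \in M/N$ of degree $d$. The gradings on $M$ and $N$ are compatible, so I can lift $\bar z$ to a homogeneous $z \in M$ of degree $d$. Since $M$ is Eulerian, $E_r \cdot z = \binom{d}{r} z$ in $M$; applying $\pi$, which is $\scr{D}$-linear and degree-preserving, yields $E_r \cdot \bar z = \binom{d}{r} \bar z = \binom{\deg(\bar z)}{r}\bar z$ in $M/N$. Therefore $M/N$ is Eulerian.

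There is no real obstacle here: the only point worth noting is that we are using the standard fact that every homogeneous class in a graded quotient admits a homogeneous lift of the same degree, which is why the shift-by-$\ell$ subtlety from Remark \ref{basic remarks}(1) does not interfere. The argument does not require Theorem \ref{Eulerian is detemined by generators} or any characteristic-dependent input.
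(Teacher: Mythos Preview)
Your proof is correct and follows essentially the same approach as the paper: verify the Euler formula for homogeneous elements of a submodule by viewing them inside $M$, and for a quotient lift a homogeneous class to a homogeneous element of the same degree in $M$ and push the identity down through the $\scr{D}$-linear surjection.
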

\begin{proof}
Let $N$ be a graded submodule of $M$. Since each homogeneous element is also a homogeneous element in $M$, it is clear that $N$ is also Eulerian. Given a $\scr{D}$-linear degree-preserving surjection $\psi:M\to M'$ and a homogeneous element $z'\in M'$, there is a homogeneous $z\in M$ with the same degree such that $\psi(z)=z'$ and hence we have (for every $r$)
\[E_r\cdot z'= E_r\cdot\psi(z)=\psi(E_r\cdot z)=\psi(\binom{\deg(z)}{r}\cdot z)=\binom{\deg(z')}{r} \cdot z'.\]
This proves that $M'$ is also Eulerian.
\end{proof}

We end this section with the following result which is one of the key ingredients for our application to local cohomology.
\begin{theorem}
\label{R and E}
\begin{enumerate}
\item The graded $\scr{D}$-module $R(\ell)$ is Eulerian if and only if $\ell=0$.
\item The graded $\scr{D}$-module $\sideset{^*}{}\E(\ell)$ is Eulerian if and only $\ell=n$.
\end{enumerate}
\end{theorem}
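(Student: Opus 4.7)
My plan is to handle the two parts separately, in each case combining the Eulerian property of a well-chosen ambient module with the uniqueness statement in Remark \ref{basic remarks}(1).

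Part (1) is essentially immediate. By Remark \ref{basic remarks}(3) the polynomial ring $R$ itself is Eulerian, and by Remark \ref{basic remarks}(1) at most one degree shift of any given graded $\scr{D}$-module can be Eulerian. So the conclusion for $R(\ell)$ is forced: $\ell = 0$.

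For Part (2), the main step is to identify $\sideset{^*}{}\E(n)$ with the top local cohomology module $H^n_{\bm}(R)$ as graded $\scr{D}$-modules. The \v{C}ech realization gives
\[
H^n_{\bm}(R) \;=\; R_{x_1\cdots x_n}\Big/\sum_{i=1}^n R_{x_1\cdots \widehat{x_i}\cdots x_n},
\]
and the socle generator $\overline{1/(x_1\cdots x_n)}$ sits in degree $-n$ here (since each $x_i$ has degree $+1$). The paper's convention places $1/(x_1\cdots x_n)$ in degree $0$ inside $\sideset{^*}{}\E$, so the two graded modules differ precisely by the shift $\sideset{^*}{}\E = H^n_{\bm}(R)(-n)$; equivalently $\sideset{^*}{}\E(n) \cong H^n_{\bm}(R)$ as graded $\scr{D}$-modules.

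Next I would observe that $R_{x_1\cdots x_n}$ is Eulerian. This is not a new argument: the computation carried out in Remark \ref{basic remarks}(3) was already performed for monomials $x_1^{j_1}\cdots x_n^{j_n}$ with $j_1,\dots,j_n$ arbitrary integers, yielding $E_r\cdot x_1^{j_1}\cdots x_n^{j_n} = \binom{j_1+\cdots+j_n}{r} x_1^{j_1}\cdots x_n^{j_n}$, and such monomials span $R_{x_1\cdots x_n}$. Then $H^n_{\bm}(R)$, being a graded $\scr{D}$-module quotient of $R_{x_1\cdots x_n}$, is Eulerian by Proposition \ref{submodules and quotients}; hence so is $\sideset{^*}{}\E(n)$. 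The fact that $\ell = n$ is the unique shift making $\sideset{^*}{}\E(\ell)$ Eulerian then follows once more from Remark \ref{basic remarks}(1).

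The only point requiring care is the bookkeeping that locates the socle element in degree $-n$ inside $H^n_{\bm}(R)$ and in degree $0$ inside $\sideset{^*}{}\E$, so that the shift $\sideset{^*}{}\E(n) \cong H^n_{\bm}(R)$ is correctly oriented; after that, the argument is a direct assembly of Remark \ref{basic remarks} and Proposition \ref{submodules and quotients}. (Alternatively, since $\sideset{^*}{}\E$ is generated over $\scr{D}$ by $1/(x_1\cdots x_n)$, one could apply Theorem \ref{Eulerian is detemined by generators} and verify Euler's formula on this single generator, using exactly the same monomial computation from Remark \ref{basic remarks}(3) with $j_1=\cdots=j_n=-1$.)
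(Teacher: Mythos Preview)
Your proof is correct, and Part~(1) matches the paper exactly. For Part~(2) your route is a mild detour compared to the paper's argument, though it rests on the same computation.

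The paper does not pass through $H^n_{\bm}(R)$ or invoke Proposition~\ref{submodules and quotients}. Instead it observes directly that $\sideset{^*}{}\E$ is spanned over $K$ by the monomials $x_1^{j_1}\cdots x_n^{j_n}$ with each $j_t\le -1$, and that in $\sideset{^*}{}\E(n)$ such a monomial has degree $j_1+\cdots+j_n$. The monomial identity from Remark~\ref{basic remarks}(3) (valid for arbitrary integer exponents) then verifies the Euler formula on this $K$-basis immediately, so $\sideset{^*}{}\E(n)$ is Eulerian. Your \v{C}ech/quotient argument and your alternative via Theorem~\ref{Eulerian is detemined by generators} both work, but they introduce extra structure (the local cohomology identification, or the fact that one generator suffices over $\scr{D}$) that the paper's proof does not need: checking the formula on a $K$-spanning set is already enough, since $E_r$ is $K$-linear.
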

\begin{proof}
By Remark \ref{basic remarks} (1), it suffices to show that $R(0)$ and $\sideset{^*}{}\E(n)$ are Eulerian graded. It is clear that $R=R(0)$ is Eulerian by Remark \ref{basic remarks} (3). Since $\sideset{^*}{}\E(\ell)$ is spanned over $K$ by $x_1^{j_1}x_2^{j_2}\cdots x_n^{j_n}$ with each $j_t\leq -1$. By the computation in Remark \ref{basic remarks} (3), it is clear that $\sideset{^*}{}\E(n)$ is Eulerian graded (because in $\sideset{^*}{}\E(n)$, the element $x_1^{j_1}x_2^{j_2}\cdots x_n^{j_n}$ has degree $j_1+\cdots+j_n$).
\end{proof}

\section{Eulerian graded $\scr{D}$-module in characteristic $0$}
Throughout this section $K$ will be a field of characteristic $0$. In this section we collect some properties of Eulerian graded $\scr{D}$-modules when $\Char(K)=0$. The main result is that, if a graded $\scr{D}$-module $M$ is Eulerian, then so is $M_f$ for each $f\in R$. This is one of the ingredients for our application to local cohomology in Section \ref{local cohomology}. First we observe that, in characteristic 0, if each homogeneous element $z$ in a graded $\scr{D}$-module $M$ satisfies (\ref{Euler's formula}) for $r=1$ (instead of for all $r\geq 1$), then $M$ is Eulerian.

\begin{proposition}
\label{Eulerian in char 0 is determined by E_1}
Let $M$ be a graded $\scr{D}$-module. If $E_1\cdot z=\deg(z)\cdot z$ for every homogeneous element $z\in M$. Then $M$ is Eulerian.
\end{proposition}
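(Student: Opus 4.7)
The plan is to prove this by induction on $r$, using Proposition \ref{relation about E_r and E_r+1} as the engine. The characteristic $0$ hypothesis enters only at one point: the ability to divide by $r+1$.

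First, I would observe that each Euler operator $E_r$ is homogeneous of degree $0$ on $\scr{D}$, since $x_1^{i_1}\cdots x_n^{i_n}$ has degree $r$ and $\partial_1^{[i_1]}\cdots\partial_n^{[i_n]}$ has degree $-r$. Therefore, for a homogeneous $z \in M$ of degree $d$, the element $E_r \cdot z$ is again homogeneous of degree $d$. The base case $r=1$ is exactly the hypothesis $E_1 \cdot z = d \cdot z = \binom{d}{1} z$.

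For the inductive step, assume $E_r \cdot z = \binom{d}{r} z$. Applying the hypothesis once more to the homogeneous element $E_r \cdot z$ of degree $d$, I get
\[
E_1 \cdot (E_r \cdot z) = d \cdot (E_r \cdot z) = d \binom{d}{r} z.
\]
On the other hand, Proposition \ref{relation about E_r and E_r+1} gives
\[
E_1 \cdot E_r = (r+1) E_{r+1} + r E_r,
\]
so
\[
E_1 \cdot (E_r \cdot z) = (r+1) E_{r+1} \cdot z + r \binom{d}{r} z.
\]
Comparing, $(r+1) E_{r+1} \cdot z = (d-r)\binom{d}{r} z$. Since $\Char(K) = 0$, I may divide by $r+1$, and using the identity $\frac{d-r}{r+1}\binom{d}{r} = \binom{d}{r+1}$ I conclude $E_{r+1} \cdot z = \binom{d}{r+1} z$, completing the induction.

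There is no real obstacle here; the only subtlety is noting that characteristic $0$ is exactly what allows the division by $r+1$ at each step, which explains why in positive characteristic the single relation $E_1 \cdot z = \deg(z)\cdot z$ is insufficient and one genuinely needs the higher-order Euler formulas built into Definition \ref{Eulerian D-modules}.
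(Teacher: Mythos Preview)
Your proof is correct and follows essentially the same route as the paper's: both proceed by induction on $r$, invoke Proposition \ref{relation about E_r and E_r+1} to express $E_{r+1}$ in terms of $E_1\cdot E_r$ and $E_r$, and then divide by $r+1$ using the characteristic $0$ hypothesis. Your explicit remark that $E_r$ has degree $0$ (so that $E_r\cdot z$ is again homogeneous of degree $d$ and the hypothesis applies to it) is a helpful clarification, but the argument is otherwise the same.
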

\begin{proof}
We prove by induction that $E_r\cdot z=\D\binom{\deg(z)}{r}\cdot z$ for every $r\geq 1$. When $r=1$ this is exactly $E_1\cdot z=\deg(z)\cdot z$ which is given. Now suppose we know $\D E_r\cdot z=\binom{\deg(z)}{r}\cdot z$. By Proposition \ref{relation about E_r and E_r+1}, we know that $E_{r+1}=\D\frac{1}{r+1}(E_1\cdot E_r-rE_r)$. So we have
\begin{eqnarray*}
E_{r+1}\cdot z&=&\frac{1}{r+1}(E_1\cdot E_r-rE_r)\cdot z \\
&=&\frac{1}{r+1}(E_1\cdot\binom{\deg(z)}{r}\cdot z-r\cdot\binom{\deg(z)}{r}\cdot z)\\
&=&\frac{1}{r+1}\cdot\binom{\deg(z)}{r}\cdot(\deg(z)\cdot z-rz)\\
&=&\frac{1}{r+1}\cdot\binom{\deg(z)}{r}(\deg(z)-r)\cdot z\\
&=&\binom{\deg(z)}{r+1}\cdot z
\end{eqnarray*}
where the second equality uses the induction hypothesis. This finishes the proof.
\end{proof}

\begin{remark}
As we have seen, Lemma \ref{relation about E_r and E_r+1} is quite useful when $\Char(K)=0$. Unfortunately, this is no longer the case once we are in characteristic $p$. For instance, when $r\equiv -1$(mod $p$), we can't link $E_{r+1}$ and $E_r$ via Lemma \ref{relation about E_r and E_r+1}. This is one of the reasons that we treat characteristic 0 and characteristic $p$ separately in two different sections.
\end{remark}

\begin{corollary}
Let $J$ be a homogeneous left ideal in $\scr{D}$. Then $\D\frac{\scr{D}}{J}$ is Eulerian if and only if
$\D\sum^n_{i=1}x_i\partial_i \in J$.
\end{corollary}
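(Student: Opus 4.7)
The plan is to combine Proposition \ref{D/J is Eulerian iff E_r in J}, which characterizes Eulerianness of a cyclic $\scr{D}$-module by the containment $E_r \in J$ for every $r \geq 1$, with the characteristic-$0$ simplification that reduces the whole hierarchy of Euler operators to $E_1$. Concretely, I want to show that in characteristic $0$ the condition ``$E_1 \in J$'' already forces ``$E_r \in J$ for all $r \geq 1$,'' at which point Proposition \ref{D/J is Eulerian iff E_r in J} immediately finishes the argument.

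The forward direction is essentially free: if $\scr{D}/J$ is Eulerian, then by Proposition \ref{D/J is Eulerian iff E_r in J} we have $E_r \in J$ for every $r \geq 1$, and specializing to $r=1$ gives $\sum_{i=1}^n x_i \partial_i = E_1 \in J$.

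For the converse, I proceed by induction on $r$, using the relation $E_1 \cdot E_r = (r+1)E_{r+1} + r E_r$ established in Proposition \ref{relation about E_r and E_r+1}. The base case is the hypothesis $E_1 \in J$. Assuming $E_r \in J$, the fact that $J$ is a left ideal gives $E_1 \cdot E_r \in J$ and $r E_r \in J$, hence $(r+1)E_{r+1} \in J$; since $\Char(K) = 0$, the scalar $r+1$ is a unit in $K \subset \scr{D}$, so $E_{r+1} \in J$. This completes the induction, and Proposition \ref{D/J is Eulerian iff E_r in J} then yields that $\scr{D}/J$ is Eulerian.

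There is no real obstacle here; the only subtle point, already flagged in the remark following Proposition \ref{Eulerian in char 0 is determined by E_1}, is that the step $(r+1)E_{r+1} \in J \Rightarrow E_{r+1} \in J$ uses invertibility of $r+1$ and therefore genuinely requires characteristic $0$. In positive characteristic the induction stalls precisely when $r \equiv -1 \pmod{p}$, which is why the corollary as stated holds only in the characteristic-$0$ setting of this section.
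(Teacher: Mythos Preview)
Your proof is correct and follows essentially the same approach as the paper. The paper's one-line argument cites Proposition~\ref{Eulerian in char 0 is determined by E_1} together with the proof of Proposition~\ref{D/J is Eulerian iff E_r in J}; you have simply unfolded this by applying the recursion from Proposition~\ref{relation about E_r and E_r+1} directly at the level of the left ideal $J$ rather than routing through the element-level statement of Proposition~\ref{Eulerian in char 0 is determined by E_1}, whose proof uses the identical induction.
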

\begin{proof}
This is clear from Proposition \ref{Eulerian in char 0 is determined by E_1} and (the proof of) Proposition \ref{D/J is Eulerian iff E_r in J}.
\end{proof}

\begin{proposition}
\label{localization of Eulerian D-modules in char 0} If $M$ is an Eulerian
graded $\scr{D}$-module, so is $S^{-1}M$ for each homogeneous multiplicative system $S\subseteq R$. In particular, $M_f$ is Eulerian for
each homogeneous polynomial $f \in R$.
\end{proposition}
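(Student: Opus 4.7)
The plan is to reduce to checking Euler's formula only for the single operator $E_1$, using the characteristic-$0$ criterion proved just above.

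First I would observe that by Proposition \ref{Eulerian in char 0 is determined by E_1}, it suffices to verify that every homogeneous element $w\in S^{-1}M$ satisfies $E_1\cdot w=\deg(w)\cdot w$. Any homogeneous element of $S^{-1}M$ may be written as $z/s$ with $z\in M$ and $s\in S$ both homogeneous, and then $\deg(z/s)=\deg(z)-\deg(s)$.

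Next I would recall that in characteristic $0$ the ring $\scr{D}$ is generated over $R$ by the first-order derivations $\partial_1,\dots,\partial_n$, so to describe the $\scr{D}$-module structure on $S^{-1}M$ it is enough to specify the action of each $\partial_i$. Since each $\partial_i$ is a derivation on $R$, it extends uniquely to $S^{-1}M$ by the Leibniz/quotient rule
\[
\partial_i(z/s)=\frac{\partial_i(z)}{s}-\frac{z\,\partial_i(s)}{s^2}.
\]

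Then the verification is a direct computation. Using that $M$ is Eulerian (so $E_1\cdot z=\deg(z)\cdot z$) and that $R$ is Eulerian (so $E_1\cdot s=\deg(s)\cdot s$, by Remark \ref{basic remarks}(3) applied to the homogeneous polynomial $s$), I would compute
\[
E_1\cdot(z/s)=\sum_{i=1}^{n}x_i\partial_i(z/s)=\frac{\sum_i x_i\partial_i(z)}{s}-\frac{z\cdot\sum_i x_i\partial_i(s)}{s^2}=\frac{E_1\cdot z}{s}-\frac{z\cdot(E_1\cdot s)}{s^2},
\]
which simplifies to
\[
\frac{\deg(z)\cdot z}{s}-\frac{z\cdot\deg(s)\cdot s}{s^2}=(\deg(z)-\deg(s))\cdot\frac{z}{s}=\deg(z/s)\cdot\frac{z}{s}.
\]
Applying Proposition \ref{Eulerian in char 0 is determined by E_1} concludes that $S^{-1}M$ is Eulerian, and the ``in particular'' statement follows by taking $S$ to be the multiplicative system generated by the homogeneous polynomial $f$.

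There is no serious obstacle: the argument is essentially a one-line computation once one has both the $E_1$-only criterion available in characteristic $0$ and the description of the induced $\scr{D}$-action on the localization by the Leibniz rule. The only small point to keep straight is that the computation uses Eulerian-ness of both $M$ (to handle the numerator) and $R$ itself (to handle the denominator through $E_1\cdot s=\deg(s)\cdot s$).
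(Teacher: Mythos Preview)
Your proposal is correct and follows essentially the same approach as the paper: reduce to $E_1$ via Proposition~\ref{Eulerian in char 0 is determined by E_1}, then apply the quotient rule to compute $E_1\cdot(z/s)$ directly. The paper writes the fraction as $z/f^t$ but the computation is line-for-line the same as yours.
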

\begin{proof}
By Proposition \ref{Eulerian in char 0 is determined by E_1}, it suffices to show for each homogeneous $f\in S$ and $z\in M$, we have $E_1\cdot\D\frac{z}{f^t}=\deg(\frac{z}{f^t})\cdot \frac{z}{f^t}$. Now we compute
\begin{eqnarray*}
E_1\cdot \frac{z}{f^t}&=&\sum_{i=1}^nx_i\partial_i\cdot \frac{z}{f^t}\\
&=&\sum_{i=1}^nx_i\cdot \frac{f^t\cdot
\partial_i(z)-\partial_i(f^t)\cdot z}{f^{2t}}\\
&=&\frac{1}{f^{2t}}(f^t\sum_{i=1}^nx_i\partial_i(z)-
\sum_{i=1}^nx_i\partial_i(f^t)\cdot z)\\
&=&\frac{1}{f^t}\cdot \deg(z)\cdot z- \frac{1}{f^{2t}}\cdot
\deg(f^t)\cdot f^t\cdot z\\
&=&(\deg(z)-\deg(f^t))\cdot \frac{z}{f^t}\\
&=&\deg(\frac{z}{f^t})\cdot\frac{z}{f^t}
\end{eqnarray*}
This finishes the proof.
\end{proof}

\begin{remark}
\begin{enumerate}
\item It turns out that Eulerian graded $\scr{D}$-modules are {\it not} stable under extension because of the following short exact sequence of graded $\scr{D}$-modules:
\[0\to \frac{\scr{D}}{\langle \sum^n_{i=1}x_i\partial_i \rangle}\xrightarrow{\sum^n_{i=1}x_i\partial_i} \frac{\scr{D}}{\langle (\sum^n_{i=1}x_i\partial_i)^2 \rangle}\to \frac{\scr{D}}{\langle \sum^n_{i=1}x_i\partial_i \rangle}\to 0,\]
where the map $\D\frac{\scr{D}}{\langle \sum^n_{i=1}x_i\partial_i \rangle}\xrightarrow{\sum^n_{i=1}x_i\partial_i} \frac{\scr{D}}{\langle (\sum^n_{i=1}x_i\partial_i)^2 \rangle}$ is the multiplication by $\sum^n_{i=1}x_i\partial_i$, {\it i.e.} $\overline{a}\mapsto \overline{a\cdot (\sum^n_{i=1}x_i\partial_i) }$.
\item Since $\D\dim( \frac{\scr{D}}{\langle \sum^n_{i=1}x_i\partial_i \rangle})=2n-1$ and $ \D\frac{\scr{D}}{\langle \sum^n_{i=1}x_i\partial_i \rangle}$ is Eulerian, finitely generated (even cyclic) Eulerian graded $\scr{D}$-modules may not be holonomic when $n\geq 2$.
\item When $n=1$, it is rather straightforward to check that each finitely generated Eulerian graded $\scr{D}$-module is holonomic.
\item As we will see in section 5, in characteristic $0$, a vast class of graded $\scr{D}$-modules (namely local cohomology modules of $R$) are both Eulerian and holonomic.
\end{enumerate}
\end{remark}

\section{Eulerian graded $\scr{D}$-module in characteristic $p>0$}
Throughout this section $K$ will be a field of characteristic $p>0$. In this section we prove that being Eulerian is preserved under localization. The proof is quite different from that in characteristic $0$. We also show that each graded $F$-module is always an Eulerian graded $\scr{D}$-module, which will enable us to recover the main result in \cite{YiZhangGradedFModules} in Section \ref{local cohomology}.

\begin{proposition}
\label{localization of Eulerian D-modules in char p} If $M$ is an Eulerian
graded $\scr{D}$-module, so is $S^{-1}M$ for each homogeneous multiplicative system $S\subseteq R$. In particular, $M_f$ is Eulerian for each homogeneous
polynomial $f \in R$.
\end{proposition}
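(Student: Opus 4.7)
The plan is to parallel the structure of Proposition~\ref{localization of Eulerian D-modules in char 0} but without the characteristic-$0$ luxury of checking only $E_1$; here I must verify that each homogeneous element $z/f^t\in S^{-1}M$ (with $z\in M$ and $f\in S$ homogeneous) satisfies $E_r\cdot(z/f^t)=\binom{\deg(z/f^t)}{r}\cdot(z/f^t)$ in its own right, for every $r\geq 1$. The key device is a Frobenius shift: given $r$, choose $e$ large enough that $p^e>\max\{r,t\}$, and rewrite $z/f^t=y/f^{p^e}$ with $y:=zf^{p^e-t}\in M$ homogeneous.

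The first step will be to exploit that $f^{p^e}$ is a $p^e$-th power: $\partial_i^{[k]}(f^{p^e})=0$ for $0<k<p^e$. This follows from the monomial formula $\partial_i^{[k]}(x_1^{a_1}\cdots x_n^{a_n})=\binom{a_i}{k}x_1^{a_1}\cdots x_i^{a_i-k}\cdots x_n^{a_n}$ combined with Frobenius linearity $g^{p^e}=\sum c_\alpha^{p^e}x^{\alpha p^e}$, since $\binom{a_ip^e}{k}\equiv 0\pmod p$ for $0<k<p^e$. Combined with the Leibniz rule for divided-power operators, this forces $\partial_i^{[k]}(y/f^{p^e})=\partial_i^{[k]}(y)/f^{p^e}$ in $M_f$ for every $0\leq k<p^e$, and by iterating in the $n$ variables I get $\partial_1^{[i_1]}\cdots\partial_n^{[i_n]}(y/f^{p^e})=\partial_1^{[i_1]}\cdots\partial_n^{[i_n]}(y)/f^{p^e}$ whenever all $i_j<p^e$. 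Since every monomial appearing in $E_r$ satisfies $i_j\leq r<p^e$, I conclude
\[E_r\cdot(y/f^{p^e})=\frac{E_r(y)}{f^{p^e}}=\binom{\deg(y)}{r}\cdot\frac{y}{f^{p^e}},\]
where the second equality uses that $M$ itself is Eulerian.

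It remains to match $\binom{\deg(y)}{r}$ with $\binom{\deg(z/f^t)}{r}$ as elements of $K$. Since $\deg(y)=\deg(z/f^t)+p^e\deg(f)$, the matter reduces to the congruence $\binom{a+p^ed}{r}\equiv\binom{a}{r}\pmod p$ for all $a,d\in\Z$ and $r<p^e$. I would establish this via Vandermonde's identity $\binom{a+p^ed}{r}=\sum_{k=0}^{r}\binom{a}{r-k}\binom{p^ed}{k}$, which holds as a polynomial identity and hence for arbitrary integer inputs, together with the claim that $\binom{p^ed}{k}$ is divisible by $p$ whenever $0<k<p^e$. The claim in turn follows from a $p$-adic valuation estimate: in the numerator $p^ed(p^ed-1)\cdots(p^ed-k+1)$, the leading factor $p^ed$ has $v_p\geq e$ while each subsequent factor $p^ed-j$ has $v_p$ equal to $v_p(j)$ (since $v_p(j)<e$), so the numerator has $v_p\geq e+v_p((k-1)!)$, while the denominator $k!$ has $v_p=v_p((k-1)!)+v_p(k)<e+v_p((k-1)!)$.

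The Leibniz reduction is routine once one has the vanishing $\partial_i^{[k]}(f^{p^e})=0$; the main obstacle, and the genuinely characteristic-$p$ input, is the combinatorial congruence of the last step. It plays the role that the inductive step ``$E_r\Rightarrow E_{r+1}$ via $E_1$'' played in characteristic $0$ and which, as noted in the remark following Proposition~\ref{Eulerian in char 0 is determined by E_1}, is no longer available here. The whole argument is tailored to the freedom of replacing the denominator $f^t$ by its Frobenius thickening $f^{p^e}$, a maneuver with no counterpart over a field of characteristic zero.
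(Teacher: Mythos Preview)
Your proof is correct and follows essentially the same approach as the paper: rewrite $z/f^t$ with a $p^e$-th power denominator, use that the divided-power operators of order $<p^e$ are $R^{p^e}$-linear (the paper states this directly; you derive it via $\partial_i^{[k]}(f^{p^e})=0$ plus Leibniz), apply the Eulerian hypothesis to the numerator, and then invoke the congruence $\binom{a+p^e d}{r}\equiv\binom{a}{r}\pmod p$ for $r<p^e$. The only difference is that you spell out the justification of this last congruence via Vandermonde and a $p$-adic valuation count, whereas the paper simply asserts it.
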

\begin{proof}
First notice that, $\partial_i^{[j]}$ is $R^{p^e}$-linear if $p^e\geq j+1$. So we have
\[\partial_i^{[j]}(z)=\partial_i^{[j]}(f^{p^e}\cdot\frac{z}{f^{p^e}})=f^{p^e}\cdot\partial_i^{[j]}(\frac{z}{f^{p^e}})\]
This tells us that, if $p^e\geq r+1$ and $f\in S$, then $\D\partial_i^{[j]}(\frac{z}{f^{p^e}})=\frac{1}{f^{p^e}}\partial_i^{[j]}(z)$ for every $j\leq r$ in $S^{-1}M$. In particular we have
\[E_r\cdot\frac{z}{f^{p^e}}=\frac{1}{f^{p^e}}E_r\cdot z \]
For any homogeneous $\D\frac{z}{f^t}\in S^{-1}M$, we can multiply both the numerator
and denominator by a large power of $f$ and write $\D\frac{z}{f^t}=\frac{f^{p^e-t}z}{f^{p^e}}$
for some $p^e\geq \max\{r+1,t\}$. So we have
\begin{eqnarray*}
E_r\cdot\frac{z}{f^t}&=&E_r\cdot\frac{f^{p^e-t}z}{f^{p^e}}=\frac{1}{f^{p^e}}E_r\cdot f^{p^e-t}z\\
&=&\frac{1}{f^{p^e}}\binom{\deg(f^{p^e-t})+\deg(z)}{r}\cdot f^{p^e-t}z\\
&=&\binom{p^e\cdot\deg(f)-\deg(f^t)+\deg(z)}{r}\cdot\frac{f^{p^e-t}z}{f^{p^e}}\\
&=&\binom{\deg(\D\frac{z}{f^t})}{r}\cdot\frac{z}{f^t}
\end{eqnarray*}
where the last equality is because $p^e\geq r+1$ and we are in characteristic $p>0$. This finishes the proof.
\end{proof}

Recall the definition of a graded $F$-module as follows.

\begin{definition}[{\it cf.} Definitions 2.1 and 2.2 in \cite{YiZhangGradedFModules}]
For each integer $e\geq 1$, let ${^eR}$ denote the $R$-module that is the same as $R$ as a left $R$-module and whose right $R$-module structure is given by $r'\cdot r=r^pr'$ for all $r'\in {^eR}$ and $r\in R$. An $F$-module is an $R$-module $M$ equipped with an $R$-module isomorphism $\theta:M\to F(M)={^1R}\otimes_RM$. An $F$-module $(M,\theta)$ is called a graded $F$-module if $M$ is graded and $\theta$ is degree-preserving.
\end{definition}

\begin{remark}
It is clear from the definition that, if $(M,\theta)$ is an $F$-module, the map \[\alpha_e:M\xrightarrow{\theta}F(M)\xrightarrow{F(\theta)}F^2(M)\xrightarrow{F^2(\theta)} \cdots \to F^e(M)\]
induced by $\theta$ is also an isomorphism.

This induces a $\scr{D}$-module structure on $M$. To specify the induced $\scr{D}$-module structure, it suffices to specify how $\partial^{[i_1]}_1\cdots \partial^{[i_n]}_n$ acts on $M$. Choose $e$ such that $p^e\geq (i_1+\cdots +i_n)+1$. Given each element $z$, we consider $\alpha_e(z)$ and we will write it as $\sum y_j\otimes z_j$ with $y_j\in {^eR}$ and $z_j\in M$. And we define
\[\partial^{[i_1]}_1\cdots \partial^{[i_n]}_n z:=\alpha_e^{-1}(\sum \partial^{[i_1]}_1\cdots \partial^{[i_n]}_n y_j\otimes z_j).\]
See \cite[\S 1]{KatzmanLyubeznikZhangTwoExamples} for more details.
\end{remark}

When an $F$-module $(M,\theta)$ is graded, the induced map $\alpha_e$ is also degree-preserving. And hence $M$ is naturally a graded $\scr{D}$-module. It turns out that each graded $F$-module is Eulerian as a graded $\scr{D}$-module.

\begin{theorem}
If $M$ is a graded $F$-module, then $M$ is Eulerian graded as a $\scr{D}$-module.
\end{theorem}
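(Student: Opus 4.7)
The plan is to compute $E_r \cdot z$ for a homogeneous element $z \in M$ of degree $d$ by pushing the computation across the isomorphism $\alpha_e \colon M \to F^e(M) = {^eR} \otimes_R M$ for $e$ large enough relative to $r$. In the target $F^e(M)$, the action of $\partial^{[I]}$ is defined to live entirely in the first tensor factor, so the whole Euler-operator action reduces to the action of $E_r$ on polynomials in ${^eR}$, where Remark \ref{basic remarks}(3) already provides the Euler formula. The remaining work is to reconcile the Frobenius twist in the grading with a binomial identity in characteristic $p$.

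More precisely, fix $r \geq 1$ and choose $e$ with $p^e \geq r+1$. Write $\alpha_e(z) = \sum_j y_j \otimes z_j$ as a sum of homogeneous simple tensors in $F^e(M)$; since $\alpha_e$ is degree-preserving and a simple tensor $y \otimes z'$ has degree $\deg y + p^e \deg z'$, each summand satisfies $\deg y_j + p^e \deg z_j = d$. For every multi-index $I$ with $|I| \leq r < p^e$, the definition of the induced $\scr{D}$-structure gives $\partial^{[I]} z = \alpha_e^{-1}(\sum_j \partial^{[I]}(y_j) \otimes z_j)$. Using that $\alpha_e$ is $R$-linear, I can pull the monomial $x^I$ inside $\alpha_e^{-1}$, and summing over $|I| = r$ collapses to
\[
E_r \cdot z \;=\; \alpha_e^{-1}\Bigl(\sum_j (E_r \cdot y_j) \otimes z_j\Bigr) \;=\; \alpha_e^{-1}\Bigl(\sum_j \binom{\deg y_j}{r}\, y_j \otimes z_j\Bigr),
\]
where the second equality uses that $R$ is Eulerian (Remark \ref{basic remarks}(3)) applied to each homogeneous $y_j \in R$.

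The step I expect to be the main obstacle, and really the only input beyond unwinding definitions, is the combinatorial identity $\binom{d - p^e m}{r} \equiv \binom{d}{r} \pmod{p}$ whenever $r < p^e$. My plan for this is to work in $\mathbb{F}_p[[t]]$: from $(1+t)^{d - p^e m} = (1+t)^d (1 + t^{p^e})^{-m}$ and $(1+t^{p^e})^{-m} \equiv 1 \pmod{t^{p^e}}$, the coefficients of $t^r$ on both sides of $(1+t)^{d - p^e m} \equiv (1+t)^d \pmod{t^{p^e}}$ must agree. Since $K$ has characteristic $p$ and $F^e(M)$ is a $K$-vector space, the scalar $\binom{\deg y_j}{r} = \binom{d - p^e \deg z_j}{r}$ may then be replaced by $\binom{d}{r}$ in each summand. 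Therefore
\[
E_r \cdot z \;=\; \binom{d}{r}\,\alpha_e^{-1}\Bigl(\sum_j y_j \otimes z_j\Bigr) \;=\; \binom{d}{r}\, z,
\]
which is exactly the Euler formula for $z$, so $M$ is Eulerian.
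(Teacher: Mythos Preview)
Your proof is correct and follows essentially the same route as the paper's: choose $e$ with $p^e>r$, push the computation through $\alpha_e$, use that $R$ is Eulerian on the first tensor factor, and invoke the characteristic-$p$ binomial congruence $\binom{d-p^e m}{r}\equiv\binom{d}{r}$ for $r<p^e$ to make all coefficients equal to $\binom{d}{r}$. The paper simply asserts this congruence ``because we are in characteristic $p>0$'', whereas you supply the generating-function justification and make the use of $R$-linearity of $\alpha_e$ explicit; otherwise the arguments are the same.
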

\begin{proof}
Pick any homogeneous element $z\in M$, we want to show $E_r\cdot z=\D\binom{\deg(z)}{r}\cdot z$ for each $r\geq 1$. Pick $e$ such that $p^e\geq r+1$. Since $M$ is a graded $F$-module, we have a degree-preserving isomorphism $M\xrightarrow{\alpha_e}F_R^e(M)$. Assume $\alpha_e(z)=\sum_iy_i\otimes z_i$ where $y_i\in R$ and $z_i\in M$ are homogeneous and $\deg(z)=\deg(y_i\otimes z_i)=p^e\deg(z_i)+\deg(y_i)$ for each $i$. In particular we have $\D\binom{\deg(y_i)}{r}=\binom{\deg(z)}{r}$ for every $i$ (because we are in characteristic $p>0$). So we know
\begin{eqnarray*}
E_r\cdot z&=& \alpha_e^{-1}(\sum_i (E_r\cdot y_i)\otimes z_i)\\
&=&\alpha_e^{-1}(\sum_i\binom{\deg(y_i)}{r}y_i\otimes z_i)\\
&=&\alpha_e^{-1}(\binom{\deg(z)}{r}\sum_i y_i\otimes z_i)\\
&=&\binom{\deg(z)}{r}\cdot z
\end{eqnarray*}
This finishes the proof.
\end{proof}

\section{An application to local cohomology}
\label{local cohomology}
Let $R$ be an arbitrary commutative Noetherian ring and $I$ be an ideal of $R$. We recall that if $I$ is generated by
$f_1,\dots,f_l\in R$ and $M$ is any $R$-module, we have the \v{C}ech
complex:
\begin{equation*}
0\rightarrow M\rightarrow \bigoplus_jM_{f_j}\rightarrow
\bigoplus_{j,k}M_{f_jf_k}\rightarrow \dots \rightarrow M_{f_1\cdots
f_l}\rightarrow0
\end{equation*}
whose $i$-th cohomology module is $H_I^i(M)$. Here the map
$M_{f_{j_1}\cdots f_{j_i}} \rightarrow M_{f_{k_1}\cdots
f_{k_{i+1}}}$ induced by the corresponding differential is the
natural localization (up to sign) if $\{ j_1,\dots ,j_i\}$ is a
subset of $\{ k_1,\dots,k_{i+1}\}$ and is $0$ otherwise.

When $R$ is graded and $I$ is a homogeneous ideal
(i.e.$f_1,\dots,f_l$ are homogeneous elements in $R$) and $M$ is a
graded $R$-module, each differential in the \v{C}ech complex is
degree-preserving because natural localization is so. It follows
that each cohomology module
$H^{i_1}_{J_1}(\cdots(H^{i_s}_{J_s}(R)))$ is a graded $R$-module.

When $R=K[x_1,\dots,x_n]$ with $K$ a field of characteristic $p>0$ and $\bm=(x_1,\dots,x_n)$, it is proven in \cite{YiZhangGradedFModules} (using the theory of graded $F$-modules) that

\begin{theorem}[Theorem 3.4 in \cite{YiZhangGradedFModules}]
Let $R=K[x_1,\dots,x_n]$ be a polynomial ring over a field $K$ of
characteristic $p>0$ and $J_1,\dots,J_s$ be homogeneous ideals of
$R$. Each local cohomology module
$H^{i_0}_{\bm}(H^{i_1}_{J_1}\cdots(H^{i_s}_{J_s}(R)))$ is isomorphic
to a direct sum of copies of $\sideset{^*}{}\E(n)$\footnote{when
$s=1$, this is also proved in \cite[page
615]{WenliangZhangLyubeznikNumberProjectiveScheme}} (i.e. all socle
elements of $H^{i_0}_{\bm}(H^{i_1}_{J_1}\cdots(H^{i_s}_{J_s}(R)))$
must have degree $-n$).
\end{theorem}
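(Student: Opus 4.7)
The plan is to deduce the theorem from the Eulerian machinery of Sections 2--4 in three steps: first show that $M := H^{i_0}_{\bm}(H^{i_1}_{J_1}(\cdots H^{i_s}_{J_s}(R)))$ is an Eulerian graded $\scr{D}$-module; then use the Eulerian identities to force every homogeneous socle element of $M$ to have degree $-n$; finally invoke the $F$-module structure of iterated local cohomology to upgrade this socle statement to the claimed direct-sum decomposition.

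For the first step, I would start from $R$, which is Eulerian by Theorem \ref{R and E}. Each local cohomology functor $H^{i}_{J}(-)$ with $J$ homogeneous is computed by a \v{C}ech complex whose terms are finite direct sums of homogeneous localizations. By Proposition \ref{localization of Eulerian D-modules in char p} localization at a homogeneous multiplicative system preserves the Eulerian property, and by Proposition \ref{submodules and quotients} so do graded submodules and graded quotients; hence the cohomology of any graded complex of Eulerian modules is Eulerian. Iterating through the $s+1$ local cohomology functors shows $M$ is Eulerian.

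The core of the argument is the second step. Let $z \in M$ be a homogeneous socle element of degree $d$, so $x_i z = 0$ for every $i$. The $t=1$ case of Lemma \ref{swapping x and d} reads $\partial_i^{[k]} x_i = x_i \partial_i^{[k]} + \partial_i^{[k-1]}$; applied to $z$ it yields $x_i \partial_i^{[k]} z = -\partial_i^{[k-1]} z$, and a short induction gives $x_i^k \partial_i^{[k]} z = (-1)^k z$. Because operators in distinct variables commute, once any factor $(x_k^{i_k}\partial_k^{[i_k]})$ has been applied to $z$ the result is still killed by every $x_j$, so one may process the factors of a single summand of $E_r$ one variable at a time to obtain
\[
x_1^{i_1}\cdots x_n^{i_n}\,\partial_1^{[i_1]}\cdots \partial_n^{[i_n]}\, z \;=\; (-1)^{i_1+\cdots+i_n}\, z
\]
for every multi-index with sum $r$. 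Summing over the $\binom{n+r-1}{r}$ such tuples yields $E_r z = (-1)^r \binom{n+r-1}{r} z = \binom{-n}{r} z$. Comparing with $E_r z = \binom{d}{r} z$ and invoking Remark \ref{basic remarks}(1) forces $d = -n$.

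For the third step, in characteristic $p$ the module $N := H^{i_1}_{J_1}(\cdots H^{i_s}_{J_s}(R))$ is an $F$-finite $F$-module by Lyubeznik's theory, hence so is $M = H^{i_0}_{\bm}(N)$; being $\bm$-torsion, $M$ is injective as a graded $R$-module and therefore decomposes as a direct sum of shifts $\sideset{^*}{}\E(\ell_j)$. The second step forces every $\ell_j$ to equal $n$, and the theorem follows. The hard part is the second step, where the correct binomial identity must be extracted from $x_i z = 0$ to match $E_r z$ with $\binom{-n}{r} z$ in all characteristics; step one is essentially formal once Sections 2 and 4 are in place, and step three reduces the injectivity question to an external structural input on $\bm$-torsion $F$-finite $F$-modules.
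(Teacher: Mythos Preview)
Your argument is correct, but the logical architecture differs from the paper's. The paper first proves a structure result for \emph{any} graded $\scr{D}$-module supported only at $\bm$ (Proposition \ref{D-module structure support at m}): such a module is always $\bigoplus_j \sideset{^*}{}\E(n_j)$, with no $F$-module hypothesis needed, because the map $\bigoplus_j \scr{D}/\scr{D}\bm(n_j)\to M$ hitting a homogeneous socle basis is injective and its source is already $R$-injective, so it splits and the complement has zero socle. Then Eulerianity of $M$ together with Theorem \ref{R and E} forces each $n_j=n$ (Theorem \ref{final theorem}). Your route reverses the order: you first pin down the socle degree by the direct computation $E_r z=(-1)^r\binom{n+r-1}{r}z=\binom{-n}{r}z$ for any socle element $z$ (a clean argument, essentially the computation behind Theorem \ref{R and E} run abstractly), and only afterwards appeal to injectivity of $\bm$-torsion $F$-modules to get the decomposition.

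Both work, but note what each buys. Your Step 2 is pleasantly self-contained and shows the socle claim without ever decomposing $M$; however your Step 3 imports a nontrivial external fact (injectivity of $\bm$-supported $F$-modules) that is specific to characteristic $p$. The paper's Proposition \ref{D-module structure support at m} is elementary, characteristic-free, and uses only the graded $\scr{D}$-module structure, so the same proof simultaneously yields the characteristic-zero statement (Corollary \ref{socle of local cohomology is in degree n}). If you replace your third step by that proposition, your argument becomes characteristic-free as well.
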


It is a natural question (and is asked in \cite{YiZhangGradedFModules}) whether the same result holds in characteristic 0. Using our theory of Eulerian graded $\scr{D}$-module, we can give a characteristic-free proof of the same result. In particular we answer the question in characteristic 0 in the affirmative.

We begin with the following easy observation.

\begin{proposition}
Let $J_1,\dots, J_s$ be homogeneous ideals of $R$, then each local cohomology module
$H^{i_1}_{J_1}(\cdots(H^{i_s}_{J_s}(R)))$ is a graded $\scr{D}$-module.
\end{proposition}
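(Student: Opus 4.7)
The plan is to use the \v{C}ech complex description of local cohomology recalled just above the statement, show that each term in it is naturally a graded $\scr{D}$-module, verify that the differentials are $\scr{D}$-linear and degree-preserving, and then iterate.

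First I would establish the key localization fact: if $M$ is a graded $\scr{D}$-module and $f \in R$ is a homogeneous element, then $M_f$ is again a graded $\scr{D}$-module, with the natural grading $\deg(z/f^t) = \deg(z) - t\deg(f)$. In characteristic $0$ the action of each $\partial_i$ on $M_f$ is defined by the usual Leibniz rule and it is straightforward to check that this is well-defined and makes $M_f$ into a graded $\scr{D}$-module. In characteristic $p$ the same conclusion holds but using the observation (already exploited in the proof of Proposition \ref{localization of Eulerian D-modules in char p}) that $\partial_i^{[j]}$ is $R^{p^e}$-linear whenever $p^e \geq j+1$, which permits one to define $\partial_i^{[j]}(z/f^t) := \partial_i^{[j]}(f^{p^e-t}z)/f^{p^e}$ for $p^e \geq \max\{j+1,t\}$ and check that this is independent of the choice of $e$. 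In either case $\partial_i^{[j]}$ has degree $-j$ on $M_f$, so the resulting $\scr{D}$-action is compatible with the grading.

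Next I would observe that each differential in the \v{C}ech complex of $R$ with respect to the homogeneous elements generating $J_k$ is, up to sign, a natural localization map of the form $M_{f_{j_1}\cdots f_{j_i}} \to M_{f_{k_1}\cdots f_{k_{i+1}}}$, which is visibly $\scr{D}$-linear (the natural localization map commutes with every $\partial_i^{[j]}$, by construction of the $\scr{D}$-action on a localization) and degree-preserving. Hence the entire \v{C}ech complex is a complex of graded $\scr{D}$-modules with degree-preserving $\scr{D}$-linear differentials, and consequently its cohomology modules $H^{i_s}_{J_s}(R)$ inherit a graded $\scr{D}$-module structure.

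Finally I would iterate: having shown that $H^{i_s}_{J_s}(R)$ is a graded $\scr{D}$-module, the same argument applied to the \v{C}ech complex of $H^{i_s}_{J_s}(R)$ with respect to homogeneous generators of $J_{s-1}$ — using the localization fact from the first paragraph applied to $M = H^{i_s}_{J_s}(R)$ instead of $M = R$ — shows that $H^{i_{s-1}}_{J_{s-1}}(H^{i_s}_{J_s}(R))$ is again a graded $\scr{D}$-module. Repeating $s$ times yields the proposition. The only mild obstacle is the uniform handling of the $\scr{D}$-action on $M_f$ across both characteristics, which is precisely why the divided-power formulation of $\scr{D}$ used throughout the paper is convenient here.
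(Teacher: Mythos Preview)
Your proposal is correct and follows essentially the same route as the paper: the paper's proof is a one-line appeal to the fact that the natural localization maps are $\scr{D}$-linear (and degree-preserving), so the \v{C}ech complex is a complex of graded $\scr{D}$-modules and its cohomology inherits that structure. You have simply spelled out in more detail the standard fact that $M_f$ is a graded $\scr{D}$-module and made the inductive step explicit, which the paper leaves to the reader.
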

\begin{proof}
Since natural localization map is $\scr{D}$-linear (and so is each
differential in the \v{C}ech complex), our proposition follows immediately from
the \v{C}ech complex characterization of local cohomology.
\end{proof}

\begin{theorem}
\label{local cohomology modules are Eulerian}
Let $J_1,\dots, J_s$
be homogeneous ideals of $R$, then each local cohomology module
$H^{i_1}_{J_1}(\cdots(H^{i_s}_{J_s}(R)))$ (considered as a graded
$\scr{D}$-module) is Eulerian.
\end{theorem}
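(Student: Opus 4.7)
The plan is to prove, by induction on $s$, the slightly stronger statement that whenever $M$ is an Eulerian graded $\scr{D}$-module and $J$ is a homogeneous ideal of $R$, each local cohomology module $H^i_J(M)$ is again Eulerian. The base case $M = R$ is covered by Theorem \ref{R and E}(1), and iterating this stronger statement along $J_s, J_{s-1}, \dots, J_1$ will then yield the theorem as stated.

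The main step is the following. Fix homogeneous generators $f_1, \dots, f_l$ of $J$ and consider the \v{C}ech complex
\[
0 \to M \to \bigoplus_{j} M_{f_j} \to \bigoplus_{j_1 < j_2} M_{f_{j_1} f_{j_2}} \to \cdots \to M_{f_1 \cdots f_l} \to 0,
\]
whose cohomology computes $H^\bullet_J(M)$. Because each denominator $f_{j_1} \cdots f_{j_k}$ is a product of homogeneous elements, Proposition \ref{localization of Eulerian D-modules in char 0} (in characteristic $0$) or Proposition \ref{localization of Eulerian D-modules in char p} (in characteristic $p > 0$) applied repeatedly shows that every term in the complex is an Eulerian graded $\scr{D}$-module. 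The differentials are signed sums of natural localization maps, hence are degree-preserving and $\scr{D}$-linear, so their kernels and images are graded $\scr{D}$-submodules of the ambient terms. By Proposition \ref{submodules and quotients}, each such subquotient is Eulerian, and in particular so is $H^i_J(M)$.

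Granted this step, the theorem follows by working from innermost to outermost: $R$ itself is Eulerian by Theorem \ref{R and E}(1); the step applied with $M = R$ and $J = J_s$ gives that $H^{i_s}_{J_s}(R)$ is Eulerian; feeding this back in shows $H^{i_{s-1}}_{J_{s-1}}(H^{i_s}_{J_s}(R))$ is Eulerian; continuing in this way produces the conclusion for $H^{i_1}_{J_1}(\cdots(H^{i_s}_{J_s}(R)))$.

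I do not expect a serious obstacle, since the two nontrivial ingredients (homogeneous localization preserves the Eulerian property, and graded subquotients of Eulerian modules are Eulerian) have already been established in the preceding sections. The only point requiring minor care is the verification that the \v{C}ech differentials are genuinely degree-preserving and $\scr{D}$-linear, so that the cohomology inherits a graded $\scr{D}$-module structure as a subquotient of Eulerian localizations, but this is essentially the content of the proposition stated immediately before the theorem.
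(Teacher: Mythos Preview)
Your proof is correct and follows essentially the same approach as the paper: the paper's proof simply cites Propositions \ref{localization of Eulerian D-modules in char 0}, \ref{localization of Eulerian D-modules in char p}, and \ref{submodules and quotients} together with the \v{C}ech complex characterization of local cohomology, and you have spelled out exactly this argument with the induction on $s$ made explicit.
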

\begin{proof}
This follows immediately from Propositions \ref{localization of Eulerian D-modules in char 0}, \ref{localization of Eulerian D-modules in char p} and \ref{submodules and quotients}, and the \v{C}ech complex characterization of local cohomology.
\end{proof}

\begin{proposition}[{\it cf.} Proposition 2.3 in \cite{LyubeznikFinitenessLocalCohomology} in characteristic $0$ and Lemma (b) on page 208 in \cite{LyubeznikcharacteristicfreeoninjectivedimensionofDmodule} in characteristic $p>0$]
\label{D-module structure E} Let $R=K[x_1,\dots,x_n]$ and $\m=(x_1,\dots,x_n)$. There is a degree-preserving
isomorphism \[\scr{D}/\scr{D}\bm\to \sideset{^*}{}\E.\]
\end{proposition}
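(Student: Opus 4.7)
The plan is to construct an explicit map $\varphi \colon \scr{D}/\scr{D}\bm \to \sideset{^*}{}\E$ by sending the generator $\bar 1$ to the degree-$0$ element $\D\frac{1}{x_1\cdots x_n} \in \sideset{^*}{}\E$, and then verify that $\varphi$ is well-defined, degree-preserving, and bijective using the PBW-style basis for $\scr{D}$.

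First I would check that $\varphi$ is well-defined. Since $x_i \cdot \D\frac{1}{x_1\cdots x_n} = 0$ in $\sideset{^*}{}\E$ for every $i$, the left ideal $\scr{D}\bm = \sum_i \scr{D}\cdot x_i$ annihilates $\D\frac{1}{x_1\cdots x_n}$, so the $\scr{D}$-linear map $\scr{D}\to\sideset{^*}{}\E$ sending $1 \mapsto \D\frac{1}{x_1\cdots x_n}$ factors through $\scr{D}/\scr{D}\bm$. The map is degree-preserving because $\bar 1 \in \scr{D}/\scr{D}\bm$ has degree $0$ and, by the convention in \cite[Example 13.3.9]{BrodmannSharpLocalCohomology}, $\D\frac{1}{x_1\cdots x_n}$ also has degree $0$ in $\sideset{^*}{}\E$.

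For bijectivity, I would use the PBW decomposition of $\scr{D}$: every $D\in \scr{D}$ can be written uniquely as a $K$-linear combination of ``normally ordered'' monomials $\partial_1^{[b_1]}\cdots \partial_n^{[b_n]}\, x_1^{a_1}\cdots x_n^{a_n}$ with $a_i,b_i\geq 0$. Under this basis $\scr{D}\bm$ corresponds exactly to those monomials with $\sum a_i\geq 1$, so $\scr{D}/\scr{D}\bm$ has a $K$-basis $\{\,\overline{\partial_1^{[b_1]}\cdots \partial_n^{[b_n]}}\,\}_{b_1,\dots,b_n\geq 0}$. On the other side, a direct computation using $\partial_i^{[b]}(x_i^m) = \binom{m}{b} x_i^{m-b}$ with $m=-1$ gives
\[
\partial_1^{[b_1]}\cdots \partial_n^{[b_n]}\cdot \frac{1}{x_1\cdots x_n} \;=\; (-1)^{b_1+\cdots+b_n}\,\frac{1}{x_1^{b_1+1}\cdots x_n^{b_n+1}},
\]
and the right-hand side, as $(b_1,\dots,b_n)$ ranges over $\bN^n$, is precisely (up to signs) a $K$-basis of $\sideset{^*}{}\E$. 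Hence $\varphi$ sends a $K$-basis to a $K$-basis and is therefore an isomorphism.

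The only mildly delicate point is the PBW claim for $\scr{D}$ in characteristic $p>0$, where $\scr{D}$ is not generated by $x_i$ and first-order derivations but must include all divided powers $\partial_i^{[j]}$; the required fact is that the monomials $\partial_1^{[b_1]}\cdots \partial_n^{[b_n]}\, x_1^{a_1}\cdots x_n^{a_n}$ are $K$-linearly independent. This follows from the standard description of $\scr{D}$ as the algebra of differential operators on $R=K[x_1,\dots,x_n]$ together with the commutation rules recorded in Lemma \ref{combine two operators} and Lemma \ref{swapping x and d}, which allow one to put any word in the $x_i$ and $\partial_i^{[j]}$ into normal form. Once that normal form is in hand the rest of the argument is a direct computation.
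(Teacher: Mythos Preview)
Your proposal is correct and follows essentially the same route as the paper: the paper's map $\partial_1^{[i_1]}\cdots\partial_n^{[i_n]}\mapsto(-1)^{i_1+\cdots+i_n}x_1^{-i_1-1}\cdots x_n^{-i_n-1}$ is exactly the $\scr{D}$-linear extension of your assignment $\bar 1\mapsto \frac{1}{x_1\cdots x_n}$, and your computation reproduces this formula. The only difference is that the paper outsources the bijectivity to the cited references of Lyubeznik and then checks the degree-preserving property, whereas you actually carry out the PBW/basis argument that those references contain; your version is more self-contained but not a different method.
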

\begin{proof}
It is proven in Proposition 2.3 in \cite{LyubeznikFinitenessLocalCohomology} in characteristic $0$ and Lemma (b) on page 208 in \cite{LyubeznikcharacteristicfreeoninjectivedimensionofDmodule} in characteristic $p>0$ that the map $\scr{D}/\scr{D}\bm\to \sideset{^*}{}\E$ given by
\begin{equation}
\label{isomorphism injective hull}
\partial^{[i_1]}_1\cdots \partial^{[i_n]}_n\mapsto (-1)^{i_1+\cdots +i_n}x^{-i_1-1}_1\cdots x^{-i_n-1}_n
\end{equation}
is an isomorphism. The grading on $\scr{D}/\scr{D}\bm$ is induced by
the one on $\scr{D}$ and hence $\deg(\partial^{[i_1]}_1\cdots
\partial^{[i_n]}_n)=-(i_1+\cdots+i_n)$. Since in $\sideset{^*}{}\E$, the socle element $x^{-1}_1\cdots x^{-1}_n$ has degree $0$, it follows that
$\deg(x^{-i_1-1}_1\cdots x^{-i_n-1}_n)=-(i_1+\cdots+i_n)$. Therefore
it follows that  (\ref{isomorphism injective hull}) defines a
degree-preserving isomorphism $\scr{D}/\scr{D}\bm\to
\sideset{^*}{}\E$.
\end{proof}

\begin{proposition}[{\it cf.} Theorem 2.4(a) in \cite{LyubeznikFinitenessLocalCohomology} in characteristic $0$ and Lemma (c) on page 208 in \cite{LyubeznikcharacteristicfreeoninjectivedimensionofDmodule} in characteristic $p>0$]
\label{D-module structure support at m} Let $M$ be a graded
$\scr{D}$-module. If $\Supp_R(M)=\{\bm\}$, then as a graded
$\scr{D}$-module $M\cong \bigoplus_j
\frac{\scr{D}}{\scr{D}\bm}(n_j)\cong
\bigoplus_j\sideset{^*}{}\E(n_j)$.
\end{proposition}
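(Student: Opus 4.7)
The plan is to decompose $M$ as a direct sum of graded $\scr{D}$-submodules, each generated by a single homogeneous socle element, and to identify each summand with an appropriately shifted copy of $\sideset{^*}{}\E$ via the previous proposition. Since $\Supp_R(M)=\{\bm\}$ forces every element of $M$ to be $\bm$-torsion, the socle $(0:_M\bm)$ is a nonzero graded $K$-subspace of $M$, so the first step is to pick a homogeneous $K$-basis $\{z_j\}_{j\in J}$ of $(0:_M\bm)$ and set $n_j:=-\deg z_j$.

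For each $j$, the relation $\bm z_j=0$ yields a $\scr{D}$-linear surjection $\scr{D}/\scr{D}\bm\twoheadrightarrow\scr{D} z_j$ sending $\overline{1}$ to $z_j$. Under the identification $\scr{D}/\scr{D}\bm\cong\sideset{^*}{}\E$ from the previous proposition, $\overline{1}$ corresponds to $x_1^{-1}\cdots x_n^{-1}$, which has degree $0$ in $\sideset{^*}{}\E$ and hence degree $-n_j=\deg z_j$ in $\sideset{^*}{}\E(n_j)$, so the surjection becomes degree-preserving as $\sideset{^*}{}\E(n_j)\twoheadrightarrow\scr{D} z_j$. The Lyubeznik references cited in the statement of this proposition in particular imply that $\scr{D}/\scr{D}\bm\cong\sideset{^*}{}\E$ is a simple $\scr{D}$-module, so this surjection is an isomorphism $\scr{D} z_j\cong\sideset{^*}{}\E(n_j)$ of graded $\scr{D}$-modules.

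It then remains to show $N:=\sum_j\scr{D} z_j$ is a direct sum and equals $M$. For directness, each summand has one-dimensional socle $K z_j$, and any $\bm$-torsion $\scr{D}$-submodule of $M$ is essential over its socle, so a nontrivial intersection of $\scr{D} z_{j_0}$ with a finite subsum of the other $\scr{D} z_{j_i}$ would produce a $K$-linear relation among the $\{z_j\}$. For $N=M$, each $\sideset{^*}{}\E(n_j)$ is graded $R$-injective and direct sums of graded injectives over the Noetherian ring $R$ remain graded injective, so the graded inclusion $N\hookrightarrow M$ splits as graded $R$-modules; any graded complement $M'$ is $\bm$-torsion, and a nonzero socle element in $M'$ would contradict the equality of socles $(0:_M\bm)=(0:_N\bm)$ forced by the choice of basis.

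The main obstacle I expect is keeping the graded shifts consistent: one has to verify that $\deg z_j=-n_j$ produces $\scr{D} z_j\cong\sideset{^*}{}\E(n_j)$ rather than $\sideset{^*}{}\E(-n_j)$, which depends on the paper's convention that $x_1^{-1}\cdots x_n^{-1}$ has degree $0$ in $\sideset{^*}{}\E$. Once the shift is tracked correctly, the remainder of the argument amounts to the standard graded Matlis decomposition of $\bm$-torsion modules combined with the simplicity of $\sideset{^*}{}\E$ as a $\scr{D}$-module supplied by the cited references.
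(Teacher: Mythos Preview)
Your proof is correct and follows essentially the same strategy as the paper: pick a homogeneous $K$-basis of the socle, map in shifted copies of $\scr{D}/\scr{D}\bm$, and use $R$-injectivity of $\bigoplus_j\sideset{^*}{}\E(n_j)$ to split off a complement whose socle must vanish. The one tactical difference is that you prove each $\scr{D}/\scr{D}\bm(n_j)\to\scr{D} z_j$ is an isomorphism by invoking simplicity of $\sideset{^*}{}\E$ as a $\scr{D}$-module and then argue directness of the sum separately, whereas the paper bundles these into a single step: the map $\bigoplus_j\scr{D}/\scr{D}\bm(n_j)\to M$ is an isomorphism on socles and the source is $\bm$-torsion, so a nonzero kernel would have nonzero socle, giving injectivity directly without simplicity.
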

\begin{proof}
$M$ is a graded $\scr{D}$-module hence also graded as an $R$-module.
We first claim that the socle of $M$ can be generated by homogeneous
elements and we reason as follows. Pick a generator $g$ of the socle, we can write it as a
sum of homogeneous elements $g=\sum_{i=1}^tg_i$ where each $g_i$ has
a different degree. For every $x_j\in \bm$, we have
$\sum_{i=1}^tx_j\cdot g_i= x_j\cdot g=0$ (since $g$ is killed by
$\bm$), hence $x_j\cdot g_i=0$ for every $i$ (because each $x_j\cdot
g_i$ has a different degree). Therefore $g_i$ is killed by every
$x_j$, hence is killed by $\bm$, so $g_i$ is in the socle for each
$i$. This proves our claim. We also note that since the socle is killed by $\bm$, a minimal
homogeneous set of generators is actually a homogeneous $K$-basis.

Let $\{e_j \}$ be a homogeneous $K$-basis of the socle of $M$ with
$\deg(e_j)=-n_j$. There is a degree-preserving homomorphism of
$\scr{D}$-modules $\bigoplus_j
\frac{\scr{D}}{\scr{D}\bm}(n_j)\rightarrow M$ which sends $1$ of the
$j$-th copy to $e_j$. This map is injective because it induces an
isomorphism on socles and $\bigoplus_j
\frac{\scr{D}}{\scr{D}\bm}(n_j)$ is supported only at $\bm$ (as an
$R$-module). By \ref{D-module structure E}, $\bigoplus_j
\frac{\scr{D}}{\scr{D}\bm}(n_j)\cong
\bigoplus_j\sideset{^*}{}\E(n_j)$ is an injective $R$-module. So
$ M=\bigoplus_j \frac{\scr{D}}{\scr{D}\bm}(n_j)\bigoplus N$ where $N$
is some graded $R$-module supported only at $\bm$. Since the map on
the socles is an isomorphism, $N=0$, so $M=\bigoplus_j
\frac{\scr{D}}{\scr{D}\bm}(n_j)\cong
\bigoplus_j\sideset{^*}{}\E(n_j)$.
\end{proof}

\begin{theorem}
\label{final theorem}
Let $M$ be an  Eulerian graded $\scr{D}$-module. If $\Supp_R(M)=\{\bm\}$, then $M$ is isomorphic (as a graded $\scr{D}$-module) to a direct sum of copies of $\sideset{^*}{}\E(n)$.
\end{theorem}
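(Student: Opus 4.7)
The plan is to combine the structure theorem for graded $\scr{D}$-modules supported at $\bm$ with the rigidity of Eulerianness under degree shift. By the previous proposition (D-module structure support at $\bm$), the hypothesis $\Supp_R(M)=\{\bm\}$ alone forces a graded $\scr{D}$-module decomposition
\[
M\;\cong\;\bigoplus_j \sideset{^*}{}\E(n_j)
\]
for some integers $n_j$, indexed by a homogeneous $K$-basis of the socle. What remains is to pin down all the shifts $n_j$ using the Eulerian hypothesis.

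To do this, I would observe that each summand $\sideset{^*}{}\E(n_j)$ sits inside $M$ as a graded $\scr{D}$-submodule (it is a direct summand, so in particular a submodule). Since $M$ is Eulerian, Proposition \ref{submodules and quotients} applies and each $\sideset{^*}{}\E(n_j)$ is Eulerian as a graded $\scr{D}$-module. On the other hand, Theorem \ref{R and E}(2) asserts that $\sideset{^*}{}\E(\ell)$ is Eulerian if and only if $\ell=n$. Combining these two facts forces $n_j=n$ for every $j$, so
\[
M\;\cong\;\bigoplus_j \sideset{^*}{}\E(n)
\]
as graded $\scr{D}$-modules, which is the desired conclusion.

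There is really no hard step here: once the structural decomposition from the previous proposition is in hand, the whole argument is a one-line application of the hereditary behavior of Eulerianness (Proposition \ref{submodules and quotients}) together with the uniqueness-of-shift assertion (Theorem \ref{R and E}(2), which itself rests on Remark \ref{basic remarks}(1)). The only conceptual point worth flagging is that one must use that the summands $\sideset{^*}{}\E(n_j)$ inherit the Eulerian property from $M$, for which being a graded submodule (equivalently, a graded quotient via the complementary projection) is exactly the hypothesis needed.
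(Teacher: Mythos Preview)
Your proof is correct and matches the paper's argument essentially line for line: decompose $M\cong\bigoplus_j\sideset{^*}{}\E(n_j)$ via Proposition~\ref{D-module structure support at m}, then use Eulerianness together with Theorem~\ref{R and E}(2) to force $n_j=n$. The only cosmetic difference is that you explicitly invoke Proposition~\ref{submodules and quotients} to pass from $M$ to each summand, whereas the paper leaves this step implicit.
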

\begin{proof}
Since $M$ is supported only at $\bm$, we know it is isomorphic to $\bigoplus_j\sideset{^*}{}\E(n_j)$ as a graded $\scr{D}$-module by Proposition \ref{D-module structure support at m}. By our assumption, $M$ is Eulerian, so is $\bigoplus_j\sideset{^*}{}\E(n_j)$. It follows from  Theorem  \ref{R and E} that $n_j=n$ for each $j$, {\it i.e.} $M$ is isomorphic (as a graded $\scr{D}$-module) to a direct sum of copies of $\sideset{^*}{}\E(n)$. This finishes the proof.
\end{proof}

\begin{corollary}
\label{socle of local cohomology is in degree n}
Let $J_1,\dots, J_s$ be homogeneous ideals of
$R$, then $H^{i_0}_{\bm}H^{i_1}_{J_1}\cdots
H^{i_s}_{J_s}(R)$ is isomorphic (as a graded $\scr{D}$-module) to a
direct sum of copies of $\sideset{^*}{}\E(n)$ (or equivalently, all socle elements of each
$H^{i_0}_{\bm}H^{i_1}_{J_1}\cdots H^{i_s}_{J_s}(R)$ must have degree
$-n$ ).
\end{corollary}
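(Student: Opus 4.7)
The plan is to assemble this corollary as a two-line consequence of the two main structural results already proved. Concretely, I would argue that the module $M := H^{i_0}_{\bm}H^{i_1}_{J_1}\cdots H^{i_s}_{J_s}(R)$ is (a) Eulerian as a graded $\scr{D}$-module, and (b) supported only at $\bm$ as an $R$-module; once both are in place, Theorem \ref{final theorem} finishes the job.

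For (a), I would just quote Theorem \ref{local cohomology modules are Eulerian}: it treats an arbitrary iterated local cohomology $H^{i_1}_{J_1}\cdots H^{i_s}_{J_s}(R)$ with homogeneous $J_t$'s, and nothing in that statement distinguishes the outermost ideal, so taking $J_0 = \bm$ (which is certainly homogeneous) puts $M$ within the scope of that theorem. Hence $M$ is Eulerian.

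For (b), I would use the standard fact that for any $R$-module $N$, the module $H^{i_0}_{\bm}(N)$ is $\bm$-torsion, so its support is contained in $V(\bm) = \{\bm\}$; this is immediate from the definition $H^{i_0}_{\bm}(N) = \varinjlim_t \Ext^{i_0}_R(R/\bm^t, N)$ (or from the \v{C}ech-complex description with a homogeneous system of parameters for $\bm$). Thus $\Supp_R(M) \subseteq \{\bm\}$, and we may apply Theorem \ref{final theorem} to $M$ to conclude that $M \cong \bigoplus_j \sideset{^*}{}\E(n)$ as a graded $\scr{D}$-module. The parenthetical reformulation is then automatic, since the socle of $\sideset{^*}{}\E(n)$ is one-dimensional and sits in degree $-n$ (the element $\frac{1}{x_1\cdots x_n}$, which lies in degree $0$ of $\sideset{^*}{}\E$, becomes degree $-n$ after the shift by $n$).

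There is essentially no obstacle here: the whole paper has been organized so that this corollary falls out of combining Theorem \ref{local cohomology modules are Eulerian} (which handles the Eulerian property via \v{C}ech and the localization stability propositions of Sections 3 and 4) with Theorem \ref{final theorem} (which rigidifies the $\scr{D}$-module structure on $\bm$-torsion Eulerian modules using Theorem \ref{R and E} to pin down the shift). The only mild subtlety worth mentioning explicitly is to remember that $H^{i_0}_{\bm}$ is what forces the support condition; without applying $H^{i_0}_{\bm}$ at the outermost step, the intermediate $H^{i_1}_{J_1}\cdots H^{i_s}_{J_s}(R)$ is Eulerian but need not be $\bm$-torsion, and then Theorem \ref{final theorem} does not apply.
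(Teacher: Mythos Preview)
Your proposal is correct and matches the paper's own proof essentially verbatim: the paper simply says the corollary follows immediately from Theorem \ref{local cohomology modules are Eulerian} and Theorem \ref{final theorem}, which is exactly the two-step argument you outline (Eulerianness plus $\bm$-torsion support). Your additional remarks explaining why $H^{i_0}_{\bm}(N)$ is $\bm$-torsion and why the socle statement follows are accurate and would only make the one-line proof more self-contained.
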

\begin{proof}
This follows immediately from Theorems \ref{local cohomology modules are Eulerian} and \ref{final theorem}.
\end{proof}

\begin{remark}
It is proven in \cite{LyubeznikFinitenessLocalCohomology} (resp, \cite{LyubeznikFModulesApplicationsToLocalCohomology}) that
every $H^{i_1}_{J_1}(\cdots(H^{i_s}_{J_s}(R)))$ is holonomic (resp, $F$-finite) as a
$\scr{D}$-module (resp, $F$-module) in characteristic $0$ (resp, characteristic $p>0$). Therefore in any case we know $H^{i_1}_{J_1}(\cdots(H^{i_s}_{J_s}(R)))$ has finite Bass numbers ({\it cf.} Theorem 3.4(d) in \cite{LyubeznikFinitenessLocalCohomology} and Theorem 2.11 in \cite{LyubeznikFModulesApplicationsToLocalCohomology}). It
follows from this and Corollary \ref{socle of local cohomology is in
degree n} that $H^{i_0}_{\bm}H^{i_1}_{J_1}\cdots
H^{i_s}_{J_s}(R)\cong \sideset{^*}{}\E(n)^c$ for some integer
$c<\infty$.
\end{remark}

\section{Remarks on the graded injective hull of $R/P$ when $P$ is a homogeneous prime ideal}

We have seen in Theorem \ref{R and E} that $\sideset{^*}{}\E(\ell)=\sideset{^*}{}\E(R/\m)(\ell)$ is Eulerian graded if and only if $\ell=n$. In this section we wish to extend this result to $\sideset{^*}{}\E(R/P)$ where $P$ is a non-maximal homogeneous prime ideal (here $\sideset{^*}{}\E(R/P)$ denotes the graded injective hull of $R/P$, see {\it cf.} \cite[Chapter 13.2]{BrodmannSharpLocalCohomology}]). To this end, we will discuss in detail the graded structures of $\sideset{^*}{}\E(R/P)$ as an $R$-module and as a $\scr{D}$-module. The underlying idea is that, there does {\it not} exist a canonical choice of grading on $\sideset{^*}{}\E(R/P)$ when it is considered as a graded $R$-module; however, there is a canonical grading when it is considered as a graded $\scr{D}$-module.

\begin{remark}\label{*E(R/P) as a graded R-module}[$\sideset{^*}{}\E(R/P)$ as a graded $R$-module]
Since $P\neq \mathfrak{m}$, there is at least one $x_i$ that is not contained in $P$. Hence the multiplication by $x_i$ induces an automorphism on $\sideset{^*}{}\E(R/P)$, and consequently we have a degree-preserving isomorphism
\[\sideset{^*}{}\E(R/P)(-1)\xrightarrow{\cdot x_i}\sideset{^*}{}\E(R/P).\]
It follows immediately that
\[\sideset{^*}{}\E(R/P)\cong \sideset{^*}{}\E(R/P)(m)\]
for each integer $m$ in the category of graded $R$-modules. In other words, we have
\[\sideset{^*}{}\E(R/P)(i)\cong \sideset{^*}{}\E(R/P)(j)\]
for all integers $i$ and $j$.

In some sense, this tells us that $\sideset{^*}{}\E(R/P)$ does not have a canonical grading when considered merely as a graded $R$-module.
\end{remark}

However, as we will see, $\sideset{^*}{}\E(R/P)$ is equipped with a natural Eulerian graded $\scr{D}$-module structure, and from this point of view there is indeed a unique natural grading on $\sideset{^*}{}\E(R/P)$. The $\scr{D}$-module structure on $\sideset{^*}{}\E(R/P)$ is obtained via considering $H_P^{\height P}(R)_{(P)}$ where $(\cdot)_{(P)}$ denotes homogeneous localization with respect to $P$ ({\it i.e.} inverting all homogeneous elements not in $P$), which has a natural grading as follows.

\begin{remark}[Grading on $H_P^{\height P}(R)_{(P)}$]
Choose a set of homogeneous generators $f_1,\dots,f_t$ of $P$ and consider the \v{C}ech complex
\[C^{\bullet}(P):\ 0\to R\to \bigoplus_iR_{f_i}\to \cdots\to \bigoplus_{i_1< i_2<\cdots <i_j}R_{f_{i_1}\cdots f_{i_j}}\to \cdots \to R_{f_1\cdots f_t}\to 0.\]
Then, since each module has a natural grading and each differential is degree-preserving, $H^h(C^{\bullet}(P))$ also has a natural grading ($h=\height P$), hence so is $H^h(C^{\bullet}(P))_{(P)}$. We will identify $H_P^{\height P}(R)_{(P)}$ with $H^h(C^{\bullet}(P))_{(P)}$ with its natural grading.
\end{remark}

\begin{proposition}
\label{graded injective hull and local cohomology}
$\sideset{^*}{}\E(R/P)\cong H_P^{\height P}(R)_{(P)}$ in the category of graded $R$-modules.
\end{proposition}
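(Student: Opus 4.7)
The plan is to reduce the statement to the classical local fact that, for a Cohen--Macaulay local ring $(T,\mathfrak n)$ of dimension $d$, the top local cohomology $H_{\mathfrak n}^{d}(T)$ is the injective hull of the residue field $T/\mathfrak n$. Since this section restricts to $P\neq\bm$, the preceding remark already shows that $\sideset{^*}{}\E(R/P)$ is invariant under arbitrary degree shifts in the category of graded $R$-modules, so one does not need to track a specific shift when producing the desired isomorphism.

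First I would pass to the homogeneous localization $R_{(P)}$, obtained by inverting every homogeneous element of $R\setminus P$. Since $R$ is a regular graded ring and $P$ is a homogeneous prime, $R_{(P)}$ is a Noetherian graded ring whose unique maximal homogeneous ideal is $PR_{(P)}$, and it is Cohen--Macaulay (in fact regular) of dimension $h=\height P$. The \v{C}ech complex $C^{\bullet}(P)$ on the chosen homogeneous generators $f_1,\dots,f_t$ of $P$ is a complex of graded $R$-modules with degree-preserving differentials, and the functor of homogeneous localization at $P$ is exact, degree-preserving and commutes with cohomology. Since it sends each \v{C}ech term $R_{f_{i_1}\cdots f_{i_j}}$ to the analogous \v{C}ech term over $R_{(P)}$, this yields a degree-preserving isomorphism
$$H_P^{h}(R)_{(P)}\cong H_{PR_{(P)}}^{h}(R_{(P)}).$$

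I would then invoke the graded analogue of local duality: for the Cohen--Macaulay graded local ring $R_{(P)}$ of dimension $h$, the top local cohomology $H_{PR_{(P)}}^{h}(R_{(P)})$ is, up to a shift, the graded injective hull of its residue field $R_{(P)}/PR_{(P)}$. Homogeneous localization at $P$ also induces an equivalence between the category of $P$-local graded $R$-modules and that of graded $R_{(P)}$-modules, under which $\sideset{^*}{}\E_R(R/P)$ corresponds to the graded injective hull of $R_{(P)}/PR_{(P)}$ (and $\sideset{^*}{}\E(R/P)$ is $P$-local because every homogeneous element outside $P$ acts invertibly on it, as already observed in the preceding remark). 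Combining these two identifications, and absorbing the irrelevant shift via the same remark, produces the desired isomorphism. The main obstacle is the graded local duality step: one must confirm that the classical injective-hull description of top local cohomology carries over to the graded setting for $R_{(P)}$. This is standard (see, \emph{e.g.}, Bruns--Herzog \S3.6 or Brodmann--Sharp, Chapter 13), and the shift-invariance from the preceding remark lets us sidestep what would otherwise be a delicate degree-bookkeeping.
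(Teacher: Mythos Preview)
Your argument is essentially correct and reaches the goal, but it follows a genuinely different path from the paper's proof. One caveat: the ``classical local fact'' you open with is misstated --- for a Cohen--Macaulay local ring $(T,\mathfrak n)$ of dimension $d$, the module $H^d_{\mathfrak n}(T)$ is the Matlis dual of the canonical module, and this is $E(T/\mathfrak n)$ only when $T$ is Gorenstein. Since you immediately note that $R_{(P)}$ is regular, this slip does no damage to your proof, but it should be corrected.

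The paper does not pass through $R_{(P)}$ and graded local duality at all. Instead it computes both sides directly from the minimal $*$-injective resolution
\[
0\to R\to \sideset{^*}{}\E(R)\to\cdots\to\bigoplus_{\height Q=s}\sideset{^*}{}\E(R/Q)(d_Q)\to\cdots\to\sideset{^*}{}\E(R/\bm)(d_{\bm})\to 0,
\]
first using the same shift-invariance remark you cite to drop the $d_Q$ for $Q\neq\bm$. Applying $\Gamma_P$ annihilates every summand with $Q\nsubseteq P$, and then homogeneous localization at $P$ kills every remaining summand with $Q\subsetneq P$; what is left in cohomological degree $h$ is precisely $\sideset{^*}{}\E(R/P)$, which is therefore $H^h_P(R)_{(P)}$. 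This is a direct, elementary computation with the resolution; your approach instead packages the same information into the Gorenstein property of $R_{(P)}$ and the equivalence between $P$-local graded $R$-modules and graded $R_{(P)}$-modules. Your route is more conceptual but leans on more background (the graded Gorenstein/local duality statement and the equivalence of categories), while the paper's route is more hands-on and needs nothing beyond the structure of the minimal $*$-injective resolution of $R$.
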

\begin{proof}
We have a graded injective resolution of $R$ (or a *-injective resolution of $R$, {\it cf.} \cite[Chapter 13]{BrodmannSharpLocalCohomology})
\begin{equation}
0\rightarrow R\rightarrow \sideset{^*}{}\E(R)(d_{0})\rightarrow\cdots\rightarrow \bigoplus_{\height Q=s}\sideset{^*}{}\E(R/Q)(d_Q)\rightarrow\cdots\rightarrow\sideset{^*}{}\E(R/\m)(d_\m)\rightarrow 0
\end{equation}
where each $d_{Q}$ is an integer depending on $Q$. Notice that when $Q\neq\m$, $\sideset{^*}{}\E(R/Q)(i)\cong \sideset{^*}{}\E(R/Q)(j)$
for all integers $i$ and $j$ by Remark \ref{*E(R/P) as a graded R-module}. So the above resolution can be written as
\begin{equation}
\label{*-injective resolution of R}
0\rightarrow R\rightarrow \sideset{^*}{}\E(R)\rightarrow\cdots\rightarrow\bigoplus_{\height Q=s}\sideset{^*}{}\E(R/Q)\rightarrow\cdots\rightarrow\sideset{^*}{}\E(R/\m)(d_\m)\rightarrow 0
\end{equation}

Let $h=\height P$. Then $H_P^{h}(R)_{(P)}$ is the homogeneous localization of the $h$-th homology of (\ref{*-injective resolution of R}) when we apply $\Gamma_P(\cdot)$. But when we apply $\Gamma_P(\cdot)$, (\ref{*-injective resolution of R}) becomes
\begin{equation}
\label{*-injective after apply Gamma_P}
0\rightarrow 0\rightarrow \cdots\rightarrow \sideset{^*}{}\E(R/P)\rightarrow\Gamma_P(\bigoplus_{\height Q=h+1}\sideset{^*}{}\E(R/Q))\rightarrow\cdots\rightarrow\Gamma_P(\sideset{^*}{}\E(R/\m)(d_\m))\rightarrow 0
\end{equation}
and when we do homogeneous localization at $P$ to (\ref{*-injective after apply Gamma_P}), we get
\[0\rightarrow \cdots\rightarrow 0\rightarrow \sideset{^*}{}\E(R/P)\rightarrow 0\rightarrow \cdots\rightarrow 0\]
So the $h$-th homology is exactly $\sideset{^*}{}\E(R/P)$ (and when $P=\m$, we get $H_\m^n(R)\cong \sideset{^*}{}\E(R/\m)(d_\m)$, so actually $d_\m=n$). This finishes the proof.
\end{proof}

\begin{remark}[$\scr{D}$-module structure on $\sideset{^*}{}\E(R/P)$]
Since $H^h_P(R)_{(P)}$ has a natural graded $\scr{D}$-module structure, it follows from Proposition \ref{graded injective hull and local cohomology} that $\sideset{^*}{}\E(R/P)$ also has a natural graded $\scr{D}$-module structure.
\end{remark}

Since $\sideset{^*}{}\E(R/P)$ is a graded $\scr{D}$-module, it is natural to ask the following question.

\begin{question}
Let $P$ be a homogeneous prime ideal in $R$. Is there a natural grading on $\sideset{^*}{}\E(R/P)$ making it Eulerian graded?
\end{question}


\begin{remark}
$H_P^{\height P}(R)_{(P)}$ is always Eulerian graded by Theorem \ref{local cohomology modules are Eulerian}, Propositions \ref{localization of Eulerian D-modules in char 0}, and \ref{localization of Eulerian D-modules in char p}. From Remark \ref{basic remarks}(1), we know that, in the category of graded $\scr{D}$-modules, $\sideset{^*}{}\E(R/P)(\ell)$ is Eulerian graded for exactly one $\ell$, we will identify this ``canonical" $\ell$.

Contrary to the case when we consider $\sideset{^*}{}\E(R/P)$ as a graded $R$-module, we can see that in the category of graded $\scr{D}$-modules we have
\[\sideset{^*}{}\E(R/P)(i) \cong \sideset{^*}{}\E(R/P)(j)\ {\rm if\ and\ only\ if\ }i=j.\]
(Otherwise we would have $\sideset{^*}{}\E(R/P)(\ell)\cong \sideset{^*}{}\E(R/P)(\ell+j-i)$ for every $\ell$, and hence there would be more than one choice of $\ell$ such that $\sideset{^*}{}\E(R/P)(\ell)$ is Eulerian.)
\end{remark}

We wish to find the natural grading on $\sideset{^*}{}\E(R/P)$ that makes it Eulerian, and we need the following lemma (which may be well-known to experts).

\begin{lemma}
\label{isomorphism of Ext(R/P, R) and Hom(R/P, H_P(R))}
We have a canonical degree-preserving isomorphism ($h=\height P$)
\[\Ext_R^h(R/P, R)\cong \Hom_R(R/P, H_P^h(R))\]
\end{lemma}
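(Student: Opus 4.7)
The plan is to deduce the isomorphism from Grothendieck's spectral sequence for the composition of functors $\Hom_R(R/P, -) \circ \Gamma_P(-)$, carried out in the category of graded $R$-modules with degree-preserving morphisms so that the resulting isomorphism is automatically degree-preserving.

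First, I would observe the elementary but crucial identity $\Hom_R(R/P, N) = \Hom_R(R/P, \Gamma_P(N))$ for every graded $R$-module $N$, which holds because the image of any degree-preserving $R$-homomorphism $R/P \to N$ is annihilated by $P$. Since the functor $\Gamma_P$ carries graded injective $R$-modules to graded injective $R$-modules, and in particular to $\Hom_R(R/P, -)$-acyclic objects, Grothendieck's spectral sequence applies and yields a convergent first-quadrant spectral sequence of graded $R$-modules
\[E_2^{p,q} = \Ext^p_R(R/P, H_P^q(R)) \Longrightarrow \Ext^{p+q}_R(R/P, R),\]
with all differentials degree-preserving. Existence of graded injective resolutions and the needed functoriality facts are standard in the $*$-injective setup of \cite{BrodmannSharpLocalCohomology}.

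Next, since $R = K[x_1, \dots, x_n]$ is regular and hence Cohen-Macaulay, the $P$-depth of $R$ equals $h = \height P$, so $H_P^q(R) = 0$ for every $q < h$. Consequently $E_2^{p,q} = 0$ whenever $q < h$, and on the anti-diagonal $p + q = h$ the only possibly nonzero term is $E_2^{0,h} = \Hom_R(R/P, H_P^h(R))$. The outgoing differentials $d_r \colon E_r^{0,h} \to E_r^{r, h-r+1}$ all land in zero groups (since $h - r + 1 < h$ for $r \ge 2$), so $E_2^{0,h} = E_\infty^{0,h}$, and this is the only contribution to $\Ext^h_R(R/P, R)$. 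The edge map of the spectral sequence therefore gives the desired canonical, degree-preserving isomorphism $\Ext^h_R(R/P, R) \cong \Hom_R(R/P, H_P^h(R))$.

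The only step requiring real care is verifying that everything — the graded injective resolutions, the edge map of the spectral sequence, and the factorization $\Hom_R(R/P, N) = \Hom_R(R/P, \Gamma_P(N))$ — takes place in the graded category so that the final isomorphism respects the grading. This is routine: one uses $*$-injective envelopes, and the fact that a homogeneous homomorphism $R/P \to N$ has image consisting of homogeneous $P$-torsion elements, so the factorization through $\Gamma_P(N)$ is automatically degree-preserving. Once this bookkeeping is in place, no further computation is required.
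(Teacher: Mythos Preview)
Your proof via the Grothendieck spectral sequence for the composite $\Hom_R(R/P,-)\circ\Gamma_P(-)$ is correct and takes a genuinely different route from the paper's. The paper works directly with the minimal graded injective resolution of $R$: since $R$ is regular, the $s$-th term of this resolution is $\bigoplus_{\height Q=s}\sideset{^*}{}\E(R/Q)$, and applying $\Gamma_P$ kills everything in cohomological degrees $<h$ while leaving $\sideset{^*}{}\E(R/P)$ at spot $h$. The paper then identifies both $\Hom_R(R/P,H_P^h(R))$ and $\Ext_R^h(R/P,R)$ as the \emph{same} kernel at spot $h$ of the complexes obtained by applying $\Gamma_P$ and $\Hom_R(R/P,-)$ respectively to this one resolution, using the termwise identity $\Hom_R(R/P,\Gamma_P(-))=\Hom_R(R/P,-)$ on indecomposable injectives. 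Your approach is more abstract and portable---it only needs the vanishing $H_P^q(R)=0$ for $q<h$, hence only Cohen--Macaulayness of $R$, and never inspects the shape of the resolution---whereas the paper's argument is more elementary, avoids spectral sequences entirely, and dovetails with the explicit manipulations of $\sideset{^*}{}\E(R/P)$ and its grading used in the surrounding discussion.
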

\begin{proof}
$H_P^h(R)$ is the $h$-th homology of (\ref{*-injective resolution of R}) when we apply $\Gamma_P(\cdot)$, which is the $h$-th homology of (\ref{*-injective after apply Gamma_P}), which is the kernel of $\sideset{^*}{}\E(R/P)\rightarrow\Gamma_P(\oplus_{\height Q=h+1}\sideset{^*}{}\E(R/Q))$. Since $\Hom_R(R/P, \cdot)$ is left exact, we know that $\Hom_R(R/P, H_P^h(R))$ is isomorphic to the kernel of \[\sideset{^*}{}\E(R/P)\rightarrow\Hom_R(R/P, \Gamma_P(\oplus_{\height Q=h+1}\sideset{^*}{}\E(R/Q)))\cong \Hom_R(R/P, \oplus_{\height Q=h+1}\sideset{^*}{}\E(R/Q)))\] But this is exactly the $h$-th homology of (\ref{*-injective resolution of R}) when we apply $\Hom_R(R/P, \cdot)$, which by definition is $\Ext_R^h(R/P, R)$. And we want to emphasize here that the isomorphism obtained does not depend on the grading on $\sideset{^*}{}\E(R/P)$ as long as $\sideset{^*}{}\E(R/P)$ is equipped with the same grading when we calculate $\Ext_R^h(R/P, R)$ and $\Hom_R(R/P, H_P^h(R))$ as above.
\end{proof}

\begin{definition}
For a $d$-dimensional graded $K$-algebra $S$ with irrelevant maximal ideal $\m$, the {\it a-invariant} of $S$ is defined to be
\[a(S)=\max\{t\in\mathbb{Z}|H_\m^d(S)_t\neq 0\}.\]
\end{definition}

\begin{proposition}
\label{canonical inclusion}
We have $\min\{t|(\Ann_{H_P^{\height P}(R)}P)_t\neq 0\}=-a(R/P)-n$. Hence we have a degree-preserving inclusion
\[\D R/P\hookrightarrow H_P^{\height P}(R)(-a(R/P)-n)\hookrightarrow H_P^{\height P}(R)_{(P)}(-a(R/P)-n).\]
\end{proposition}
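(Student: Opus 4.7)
The plan is to combine the preceding lemma with graded local duality on the Gorenstein graded ring $R$. By Lemma \ref{isomorphism of Ext(R/P, R) and Hom(R/P, H_P(R))} I have a degree-preserving identification $\Ann_{H_P^{\height P}(R)}P \cong \Ext_R^h(R/P,R)$ with $h = \height P$, so the first claim reduces to pinning down the minimum nonzero degree of $\Ext_R^h(R/P,R)$. Writing $d = n - h = \dim R/P$ and $\omega_R = R(-n)$, graded local duality for the graded Gorenstein ring $R$ yields a degree-preserving isomorphism
\[ \Ext_R^h(R/P, R)(-n) = \Ext_R^h(R/P, \omega_R) \cong {}^{*}\!\Hom_K(H_\m^d(R/P), K). \]
Since the graded Matlis dual reverses degrees, the minimum $t$ in which the right-hand side is nonzero equals the negative of the maximum $t$ in which $H_\m^d(R/P)_t \neq 0$, which by definition of the $a$-invariant is $-a(R/P)$. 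Shifting back by $n$ shows the minimum nonzero degree of $\Ext_R^h(R/P,R)$ is $-a(R/P)-n$, which is the first assertion.

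For the inclusion, let $\omega_{R/P} := \Ext_R^h(R/P,\omega_R)$ denote the graded canonical module of $R/P$. A classical result (Aoyama, Herzog--Kunz) says that $\omega_{R/P}$ satisfies Serre's condition $S_2$; because $R/P$ is a domain, this forces $\omega_{R/P}$ to be torsion-free as an $R/P$-module, so every nonzero element has $R$-annihilator exactly equal to $P$. Pick any nonzero homogeneous element $m \in \Ext_R^h(R/P,R) \cong \omega_{R/P}(n)$ of minimum degree $-a(R/P)-n$, and let $\widetilde m \in \Ann_{H_P^h(R)}P$ be its image under the isomorphism of the previous paragraph. Then the $R$-linear map
\[ R/P \longrightarrow H_P^{\height P}(R)(-a(R/P)-n), \qquad 1+P \longmapsto \widetilde m, \]
is degree-preserving, and its kernel is $\Ann_R(\widetilde m)/P = 0$. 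Finally, composing with the homogeneous localization $H_P^h(R)(-a(R/P)-n) \to H_P^h(R)_{(P)}(-a(R/P)-n) = \sideset{^*}{}\E(R/P)(-a(R/P)-n)$ remains injective, because $\Ann_R(\widetilde m) = P$ means no homogeneous $s \notin P$ kills $\widetilde m$, so $\widetilde m$ maps to a nonzero element after inverting the homogeneous complement of $P$.

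I expect the main obstacle to be the torsion-freeness of $\omega_{R/P}$ when $R/P$ fails to be Cohen--Macaulay: the canonical-module formalism is cleanest in the CM case, and here one must appeal to (or reprove) the $S_2$ property of $\omega_{R/P}$ for non-CM domains. An alternative, if one wants to avoid quoting $S_2$, is to argue directly on the Matlis-dual side that a suitable nonzero functional on $H_\m^d(R/P)_{a(R/P)}$ has $R$-annihilator exactly $P$; this amounts to showing that for every homogeneous $s \notin P$, multiplication by $s$ fails to vanish on the top-degree component of $H_\m^d(R/P)$, which can be extracted from the action of $R/P$ on the top local cohomology.
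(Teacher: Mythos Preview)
Your argument is correct and, for the computation of the minimum nonzero degree, follows the paper's proof essentially verbatim: identify $\Ann_{H_P^h(R)}P$ with $\Ext_R^h(R/P,R)$ via Lemma~\ref{isomorphism of Ext(R/P, R) and Hom(R/P, H_P(R))}, shift by $-n$ to bring in $\omega_R$, and apply graded local duality to convert the minimum degree of $\Ext$ into $-a(R/P)$.

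Where you differ is in the treatment of the inclusion. The paper simply says ``send $\overline{1}$ to any element of $\Ann_{H_P^h(R)}P$ of minimal degree'' and leaves injectivity unremarked. You supply an actual justification: the element lies in $\omega_{R/P}(n)$, the canonical module is $S_2$ (Aoyama), hence torsion-free over the domain $R/P$, so the chosen element has annihilator exactly $P$, and this persists after homogeneous localization at $P$ since $P$ is prime. This is a genuine addition of rigor rather than a different strategy. One small remark: your last sentence only literally checks that $\widetilde m$ survives localization, but the same annihilator computation (if $r\cdot(\widetilde m/1)=0$ then $sr\widetilde m=0$ for some homogeneous $s\notin P$, forcing $r\in P$) gives injectivity of the whole composite $R/P\hookrightarrow H_P^h(R)_{(P)}(-a(R/P)-n)$, so the argument closes cleanly.
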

\begin{proof}
Let $h=\height P$ and let $s=\min\{t|(\Ann_{H_P^{\height P}(R)}P)_t\neq 0\}$. By lemma \ref{isomorphism of Ext(R/P, R) and Hom(R/P, H_P(R))}, we know
\[\Ext_R^h(R/P, R)\cong \Hom_R(R/P, H_P^h(R))\cong \Ann_{H_P^{h}(R)}P\]
so we know
\[s=\min\{t|(\Ext_R^h(R/P, R)_t\neq 0\}=\min\{t|(\Ext_R^h(R/P, R(-n))_t\neq 0\}-n\]
by graded local duality
\[\min\{t|(\Ext_R^h(R/P, R(-n))_t\neq 0\}=-\max\{t\in\mathbb{Z}|H_\m^{n-h}(R/P)_t\neq 0\}=-a(R/P)\]
Hence we get $s=-a(R/P)-n$. The second statement follows from the first one by sending $\overline{1}$ in $R/P$ to any element in $\Ann_{H_P^{\height P}(R)}P$ of degree $-a(R/P)-n$.
\end{proof}

\begin{remark}
\label{grading on injective hull}
From what we have discussed so far, we can see that $H_P^{\height P}(R)_{(P)}$ is a graded injective module (or *-injective module) and there is a degree-preserving inclusion $R/P\hookrightarrow H_P^{\height P}(R)_{(P)}(-a(R/P)-n)$, always sending $\overline{1}$ in $R/P$ to the lowest degree element in $\Ann_{H_P^{\height P}(R)}P$. Therefore we propose a ``canonical" grading on $\sideset{^*}{}\E(R/P)$ to the effect that $\sideset{^*}{}\E(R/P)$ can be identified with $H_P^{\height P}(R)_{(P)}(-a(R/P)-n)$ (where the grading on $H_P^{\height P}(R)_{(P)}$ is obtained via \v{C}ech complex).
\end{remark}

We end with the following proposition.
\begin{proposition}[Compare with Theorem 13.2.10 and Lemma 13.3.3 in \cite{BrodmannSharpLocalCohomology}]
Given the grading on $\sideset{^*}{}\E(R/P)$ as proposed in Remark \ref{grading on injective hull}, we have that
\begin{enumerate}
\item $\sideset{^*}{}\E(R/P)(\ell)$ is Eulerian if and only if $\ell=a(R/P)+n$;
\item the minimal graded injective resolution (or *-injective resolution) of $R$ can be written as
\[0\to R\to \sideset{^*}{}\E(R)\to \cdots\to\bigoplus_{\height P=j} \sideset{^*}{}\E(R/P)(a(R/P)+n)\to\cdots\to \sideset{^*}{}\E(R/\m)(n)\to 0.\]
\end{enumerate}
\end{proposition}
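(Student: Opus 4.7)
The plan is to deduce both parts from the identification $\sideset{^*}{}\E(R/P) = H_P^{\height P}(R)_{(P)}(-a(R/P)-n)$ built into Remark \ref{grading on injective hull}, so that essentially no new argument is needed beyond the results already established in the paper.

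For part (1), I would observe that under the proposed grading, shifting by $\ell = a(R/P)+n$ gives
\[\sideset{^*}{}\E(R/P)(a(R/P)+n) = H_P^{\height P}(R)_{(P)}\]
as graded $\scr{D}$-modules. The right-hand side is a homogeneous localization of the local cohomology module $H_P^{\height P}(R)$, which Theorem \ref{local cohomology modules are Eulerian} declares to be Eulerian, and Propositions \ref{localization of Eulerian D-modules in char 0} and \ref{localization of Eulerian D-modules in char p} preserve the Eulerian property under homogeneous localization. Hence $\sideset{^*}{}\E(R/P)(a(R/P)+n)$ is Eulerian, and Remark \ref{basic remarks}(1) supplies the uniqueness of $\ell$.

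For part (2), I would revisit the classical $*$-injective resolution used in the proof of Proposition \ref{graded injective hull and local cohomology}:
\[0 \to R \to \sideset{^*}{}\E(R)(d_0) \to \cdots \to \bigoplus_{\height Q = j} \sideset{^*}{}\E(R/Q)(d_Q) \to \cdots \to \sideset{^*}{}\E(R/\m)(d_\m) \to 0,\]
in which each summand $\sideset{^*}{}\E(R/Q)$ carries whichever grading the resolution forces on it. The very calculation already carried out in that proof---namely applying $\Gamma_Q(\cdot)$ followed by homogeneous localization at $Q$---shows that the resolution-inherited grading on the summand $\sideset{^*}{}\E(R/Q)$ at position $\height Q$ coincides canonically with the natural \v{C}ech grading on $H_Q^{\height Q}(R)_{(Q)}$. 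Translating back into the new canonical grading, under which $H_Q^{\height Q}(R)_{(Q)} = \sideset{^*}{}\E(R/Q)(a(R/Q)+n)$, immediately yields the resolution in its stated form. As sanity checks: $a(R)+n = 0$ matches the leftmost term $\sideset{^*}{}\E(R)$ being unshifted, and $a(R/\m)+n = n$ matches the rightmost term $\sideset{^*}{}\E(R/\m)(n)$ and the explicit computation $d_\m = n$ from the proof of Proposition \ref{graded injective hull and local cohomology}.

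The only real difficulty is careful bookkeeping among the four grading conventions in play---the Brodmann-Sharp grading on $\sideset{^*}{}\E(R/\m)$, the \v{C}ech grading on $H_P^{\height P}(R)_{(P)}$, the grading inherited from the classical $*$-injective resolution, and the new canonical grading---but each pair has already been reconciled in the earlier sections, so no substantively new input is required.
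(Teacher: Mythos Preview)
Your proposal is correct and follows essentially the same approach as the paper's own proof, which is very terse: for (1) the paper simply invokes the canonical grading together with the uniqueness of the Eulerian shift on $H_P^{\height P}(R)_{(P)}$, and for (2) it appeals to the computation of $H_P^{\height P}(R)_{(P)}$ via the minimal $*$-injective resolution (noting $a(R)=-n$ and $a(R/\m)=0$). Your write-up unpacks the same reasoning with more explicit bookkeeping among the gradings, but there is no substantive difference in strategy.
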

\begin{proof}
(1). This is clear by our grading on $\sideset{^*}{}\E(R/P)$ and the fact that there is a unique grading on $H_P^{\height P}(R)_{(P)}$ that makes it Eulerian.

(2). This follows immediately from the calculation of  $H_P^{\height P}(R)_{(P)}$ using the minimal *-injective resolution of $R$ (note that $a(R)=-n$ and $a(R/\m)=0$).
\end{proof}

\bibliographystyle{skalpha}
\bibliography{CommonBib}
\end{document}